\newtheorem{theorem}{Theorem}[section]
\newtheorem{lemma}[theorem]{Lemma}
\newtheorem{corollary}[theorem]{Corollary}
\newtheorem{proposition}[theorem]{Proposition}
\theoremstyle{definition}
\newtheorem{remark}[theorem]{Remark}
\newtheorem{assumption}[theorem]{Assumption}
\DeclareFixedFont{\ttb}{T1}{txtt}{bx}{n}{12} 
\DeclareFixedFont{\ttm}{T1}{txtt}{m}{n}{12}  
\definecolor{codegreen}{rgb}{0,0.6,0}
\definecolor{codegray}{rgb}{0.5,0.5,0.5}
\definecolor{codepurple}{rgb}{0.58,0,0.82}
\definecolor{backcolour}{rgb}{0.95,0.95,0.92}
\lstdefinestyle{mystyle}{
    commentstyle=\color{codegreen},
    keywordstyle=\color{magenta},
    numberstyle=\tiny\color{codegray},
    stringstyle=\color{codepurple},
    basicstyle=\ttfamily\footnotesize,
    breakatwhitespace=false,         
    breaklines=true,                 
    captionpos=b,                    
    keepspaces=true,                 
    showspaces=false,                
    showstringspaces=false,
    showtabs=false,                  
    tabsize=2
}
\newcommand{\EE}{\mathbb{E}}
\newcommand{\PP}{\mathbb{P}}
\newcommand{\RR}{\mathbb{R}}
\newcommand{\NN}{\mathbb{N}}
\newcommand{\sphere}{\mathbb{S}}
\newcommand{\dd}{\mathrm{d}}
\newcommand{\cN}{\mathcal{N}}
\newcommand{\cS}{\mathcal{S}}
\newcommand{\cF}{\mathcal{F}}
\newcommand{\bigO}{\mathcal{O}}
\newcommand{\scp}[2]{\left\langle{#1}, {#2}\right\rangle}
\DeclarePairedDelimiter{\abs}{\vert}{\vert}
\DeclarePairedDelimiter{\norm}{\|}{\|}
\DeclareMathOperator{\sign}{sign}
\DeclareMathOperator{\diag}{diag}
\DeclareMathOperator{\Spann}{span}
\let\dim\relax
\DeclareMathOperator{\dim}{dim}
\DeclareMathOperator*{\argmax}{argmax}
\DeclareMathOperator*{\argmin}{argmin}
\DeclareMathOperator*{\dist}{dist}
\title{Matrix-free stochastic calculation of operator norms without using adjoints}
\author{Jonas Bresch\thanks{Institute of Mathematics, Technical University Berlin, Straße des 17. Juni 136, 10623 Berlin, Germany,
\href{mailto:bresch@math.tu-berlin.de}{bresch@math.tu-berlin.de}. \url{https://www.tu.berlin/imageanalysis}.} \and Dirk A. Lorenz\thanks{Center for Industrial Mathematics, Fachbereich 3, University of Bremen, Postfach 33 04 40, 28334 Bremen, Germany, \href{mailto:d.lorenz@uni-bremen.de}{d.lorenz@uni-bremen.de}.} \and Felix Schneppe\thanks{Center for Industrial Mathematics, Fachbereich 3, University of Bremen, Postfach 33 04 40, 28334 Bremen, Germany, \href{mailto:schneppe@uni-bremen.de}{schneppe@uni-bremen.de}.} \and Maximilian Winkler\thanks{Center for Industrial Mathematics, Fachbereich 3, University of Bremen, Postfach 33 04 40, 28334 Bremen, Germany, \href{mailto:maxwin@uni-bremen.de}{maxwin@uni-bremen.de}.}}
\begin{document}
\maketitle

\begin{abstract}
  This paper considers the problem of computing the operator norm 
  of a linear map between finite dimensional Hilbert spaces 
  when only evaluations of the linear map are available 
  and under restrictive storage assumptions.
  We propose a stochastic method of random search type 
  to maximize the Rayleigh quotient 
  and employ an exact line search in the random search directions.
  Moreover, we show that the proposed algorithm converges 
  to the global maximum (the operator norm) almost surely and a sublinear convergence behavior for the corresponding 
  eigenvector and eigenvalue equation.
  Finally, we illustrate the performance of the method with numerical experiments.
\end{abstract}

\noindent\textbf{AMS Classification:}
65F35, 
15A60, 
68W20

\noindent\textbf{Keywords:}
    spectral norm, operator norm,
    stochastic algorithm,
    stochastic gradient method

\section{Introduction}
\label{sec:intro}

In this paper, we revisit the problem of the computation of the operator norm $\norm{A}$ of a linear map $A$ between Hilbert spaces of dimension $d$ and $m$, respectively.
We reconsider this problem under restrictive conditions for which we could not find a satisfactory solution in the literature and which wwe encountered in our analysis of primal-dual algorithms with mismatched adjoint in the context of computerized tomography.
First, we assume that the map $A$ is only available through a black-box oracle that returns $Av$ for a given input $v$,
e.g. there is precompiled code available which evaluates a linear map.
Moreover, we consider that it is not possible to store a large number of vectors (of either the input or the output dimension),
which means that we assume that the method has a storage requirement of $\bigO(\max(m,d))$.
Finally, we are interested in a method that is able to approximate the value $\norm{A}$ to any accuracy and also is able to make further incremental improvements if the output at some stage is not good enough.
We describe, why we are interested in this scenario in the next section.

\subsection{Motivation}
\label{sec:motivation}

Operator norms are needed in many places, e.g. to guarantee convergence of optimization methods:
The Lipschitz constant of the objective $\norm{Ax-y}^{2}$ is $\norm{A}^{2}$ and gradient descent or the forward-backward method need a stepsize $0<\tau<2/\norm{A}^{2}$ for convergence \cite{ryu2022large}.
The Chambolle-Pock method \cite{chambolle2011first} to minimize $f(x) + g(Ax)$ for convex, closed and proper functions $f$ and $g$ and a linear map $A$ needs two stepsizes $\sigma,\tau$ that fulfill $\sigma\tau\leq 1/\norm{A}^{2}$ \cite{bredies2022degenerate}).
When the operator $A$ is not given as a matrix, one usually resorts to the power iteration to compute $\norm{A}$ and when $A$ is not self-adjoint, one computes $\sqrt{\norm{A^{*}A}}$ by power iteration.
However, this poses an additional challenge: One needs the adjoint of $A$ as well.
If $A$ comes from the discretization of some operator, the discretization of the adjoint operator is usually not the adjoint of the discretized operator. This is discussed for numerical methods for partial differential equations,
e.g., in~\cite{fikl2016comprehensive} or \cite[Section 2.9]{gunzburger2002perspectives} and it is also a common issue in computed tomography where forward-, back-projector pairs are frequently not adjoint, see e.g.~\cite{bredies2021convergence,huber2025convergence}.\footnote{If the adjoint is not available or inexact, the optimization methods cannot be applied or are not guaranteed to converge. This is another problem and discussed, e.g. in~\cite{lorenz2023chambolle,lorenz2023randomdescent,chouzenoux2023convergence,Chouzenout2021pgm-adjoint}.}

  \paragraph{Leading example: Computed Tomography.}
  A concrete example where our restrictive assumptions hold is computed tomography, where the use of ``mismatched'' adjoints is common. In this application, the forward operator $A$ and the back-projector $V^*$ are often implemented as separate, highly optimized software routines (or ``black-box'' oracles) where $V^* \neq A^*$. This discrepancy stems from different discretizations schemes used to optimize memory and computational speed on GPUs~\cite{van2016fast, Hansen2021GMRESMF,huber2025convergence}. The operators $A^*$ and $V$ are therefore often only available through a black box oracle.
  Moreover, the input and output dimensions can be in the number of millions making it impossible to store a reasonable part of the system matrix.

While replacing the true adjoint $A^*$ with a mismatched operator $V^*$ allows for significant computational acceleration, it jeopardizes the convergence guarantees of standard variational methods. Recent theoretical advances have established that algorithms like the Chambolle-Pock method and the Primal-Dual Douglas-Rachford method can recover linear convergence even in the presence of adjoint mismatch, provided that step sizes are selected based on specific spectral properties of the operators, such as $\|V\|$ (the norm of the adjoint of the back-projector) and $\norm{A-V}$~\cite{lorenz2023chambolle, naldi2025pddrmm}. In the paper \cite{Chouzenout2021pgm-adjoint} the spectral properties of $L = V^*A+\kappa I$ are needed for the convergence of the proposed algorithm regarding the problem $\underset{x}{\operatorname{min}} \frac{1}{2}\|y-A x\|^2+g(x)+\frac{\kappa}{2}\|x\|^2$. In a mismatched setting where only black-box oracles for $A$ (forward) and $V^*$ (backward) are provided, the operator $V$ is implicit and unavailable. Consequently, the composition $V V^*$ required for the power iteration cannot be evaluated.

This application might look like a rarely occurring phenomenon, but the Radon transform implemented from the Python package \texttt{scikit-image}~\cite{scikit-image} provides a good example.
The function \texttt{skimage.transform.radon} implements a discretization of the linear operator given by the Radon transform.
The package does not provide the corresponding adjoint, but provides a \emph{filtered} back-projection in \texttt{skimage.transform.iradon} and when the filter is set to \texttt{None}, the \emph{unfiltered} backprojection is computed which is, in theory, the adjoint~\cite[Sections 6.5 \& 9.3.5]{Hansen2021CT}.
However, the implementation in \texttt{skimage} is far from being the exact adjoint
as can be checked easily by running the following code:\footnote{The masking is used since \texttt{radon} assumes that the input image is zeros outside the so-called reconstruction circle.}

\begin{lstlisting}[language=Python]
import numpy as np
from skimage.transform import radon, iradon

np.random.seed(4242)
N = 50
numAng = 70
theta = np.linspace(0.0, 180, numAng, endpoint=False)

v = np.random.randn(N,N)
mask = np.zeros((N,N), dtype=bool)
X,Y = np.meshgrid(np.arange(N), np.arange(N))
mask[(X-N/2)**2 + (Y-N/2)**2 <= (N/2)**2] = True
v[~mask] = 0
w = np.random.randn(N,numAng)
Av = radon(v,theta=theta)

ATw = iradon(w, theta=theta, filter_name=None)


print(f'<v,A^Tw> = {np.sum(v*ATw)},  <Av,w> = {np.sum(Av*w)}')
\end{lstlisting}
which gives the output
\begin{lstlisting}[language=Python]
<v,A^Tw> = -17.458356056194354,  <Av,w> = -754.3125085578928
\end{lstlisting}

We do not have a full explanation for the mismatch, but simple scaling can not explain the difference, since the quotient of \texttt{<v,ATw>} and \texttt{<Av,w>} changes for different realizations.

Since the norm of the back-projection is critical in establishing appropriate step-sizes in \cite{lorenz2023chambolle, naldi2025pddrmm}, algorithms, which calculate the operator norm without relying on the adjoint, are needed.\footnote{A method that can compute $\norm{A-V}$ when only black-box oracles for $A$ and $V^{*}$ are available is also needed, but this is treated in a different paper~\cite{bresch2025computing}.}

\paragraph{A curious toy example: Rotations.}
We provide a very simple illustration of the problem: Consider a function on the two dimensional disk $\mathbb{D} \coloneqq \{ x\in\RR^{2}\ \mid\ \norm{x}_{2}\leq 1 \}$ and the linear operator $A_{\alpha}$ that takes a function $f:\mathbb{D}\to \RR$ and maps it to the rotated function $A_{\alpha}f(x) \coloneqq f(R_{\alpha}x)$ where $R_{\alpha} \coloneqq \left(
\begin{smallmatrix}
  \cos(\alpha) & \sin(\alpha)\\-\sin(\alpha) & \cos(\alpha)
\end{smallmatrix}\right)
$. This operator is bounded and has norm $\norm{A_{\alpha}} = 1$ as an operator from $L^{2}(\mathbb{D})$ to itself. The adjoint operator is $A_{\alpha}^{*} = A_{-\alpha}$ and is also the inverse (the operator is orthogonal). When applied to functions that are given on an $n\times n$ grid, e.g. images with $d=n^{2}$ pixels, this rotation uses interpolation and we can neither expect that the dicretized inverse rotation is actually the inverse, nor that it is the adjoint.

This does affect the value of the operator norm: While the continuous operator fulfills $\norm{A_{\alpha}}=1$ for all $\alpha$, it is not clear if this is also fulfilled by the discretized map. Let us denote the discretized map with $\mathbf{A}_{\alpha}$. This is implemented in Python in \texttt{scipy.ndimage}, e.g. as
\begin{center}
  \texttt{rotate(x, angle, reshape=False, order=order)}
\end{center}
where \texttt{x} is the input array (and we assume that it is square and zero outside of a centered circle to avoid additional boundary effects) and \texttt{order} is the order of the interpolation routine (nearest neighbor, bilinear,\dots).
Now we compute the norm of $\mathbf{A}_{\alpha}$ by applying the power iteration to the (theoretically) self-adjoint operator $\mathbf{A}_{-\alpha}\mathbf{A}_{\alpha}$ and using the method in our paper. Without discretization error it would hold $\mathbf{A}_{-\alpha}\mathbf{A}_{\alpha} = I$, so the norm would be one.
Table~\ref{tab:table-norm-rotation} reports the values from computing $\sqrt{\norm{\mathbf{A}_{-\alpha}\mathbf{A}_{\alpha}}}$ with the power iteration and $\norm{\mathbf{A}_{\alpha}}$ with the method we propose in this paper for some values of $n$ and $\alpha$ and bicubic interpolation.
Note that the values larger than $1$ for the operator norm are not numerical errors: Our method constructs an array $\mathbf{v}$ with norm $1$ such that $\norm{\mathbf{A}_{\alpha}\mathbf{v}}$ has the values in the last column.
Notably, the relative error between $\sqrt{\norm{\mathbf{A}_{-\alpha}\mathbf{A}_{\alpha}}}$ and $\norm{\mathbf{A}_{\alpha}}$ can be as large as $17\%$ for $\alpha = 45^{\circ}$.
Even more drastically we get a norm of $1.4142$ for every angle different from $0^{\circ}$ and $90^{\circ}$ and any $n$ for the nearest neighbor interpolation.

\begin{table}[t]
\begin{minipage}{0.48\linewidth}
\begin{tabular}{rrrr}
\toprule
$n$ & $\alpha$ & $\sqrt{\norm{\mathbf{A}_{-\alpha}\mathbf{A}_{\alpha}}}$ & $\norm{\mathbf{A}_{\alpha}}$\\\midrule
25 & 10 & 0.99999 & 1.00463 \\
25 & 30 & 1.00002 & 1.05930 \\
25 & 45 & 0.99996 & 1.17387 \\
50 & 10 & 1.00080 & 0.99804 \\
50 & 30 & 0.99998 & 1.05314 \\
50 & 45 & 1.00000 & 1.16840 \\
\bottomrule
\end{tabular}
\end{minipage}
\hfill
\begin{minipage}{0.48\linewidth}
\caption{Numerical computation of operator norms for a rotation operation. The values in the column $\sqrt{\norm{\mathbf{A}_{-\alpha}\mathbf{A}_{\alpha}}}$ are obtained by power iteration while the values in the column $\norm{\mathbf{A}_{\alpha}}$ are computed with the method we propose in this paper.}
\label{tab:table-norm-rotation}
\end{minipage}
\end{table}

\paragraph{Another curious example: The Radon transform. }
Looking closer at the power iteration for $A^{*}A$ under the assumption that $V^{*}$ is used instead of the exact adjoint, one notes that there are different ways to approximate the norm of $A$: The power iteration is 
\begin{align*}
v^{k+1} = \frac{V^{*}Av^{k}}{\norm{V^{*}Av^{k}}}
\end{align*}
and there are different choices to compute the estimate the norm. Textbooks often state that 
\begin{align}\label{eq:normA1}
\norm{V^{*}A}\approx \scp{v^{k}}{V^{*}Av^{k}},\quad \textnormal{i.e.}\quad\norm{A}\approx \sqrt{\scp{v^{k}}{V^{*}Av^{k}}}
\end{align}
should be used. The implementations of \texttt{normest} in R or Octave (for general matrices) use 
\begin{align}\label{eq:normA2}
\norm{A}\approx \frac{\norm{V^{*}Av^{k}}}{\norm{Av^{k}}}
\end{align}
and another sensible choice is 
\begin{align}\label{eq:normA3}
\norm{V^{*}A} \approx \norm{V^{*}Av^{k}},\quad \textnormal{i.e.}\quad \norm{A}\approx \sqrt{\norm{V^{*}Av^{k}}}
\end{align}
and finally, we could also use 
\begin{align}\label{eq:normA4}
\norm{A}\approx \norm{Av^{k}}.
\end{align}
While these values do all converge to the same value when $V^{*} = A^{*}$ is the exact adjoint, this is not the case anymore for inexact adjoints.
The iteration will still converge when  $V^{*}A$ has a unique dominant eigenvalue (i.e. one of largest margnitude).

\begin{table}[t]
  \centering
  \begin{tabular}{rrrr}
    \toprule
    $\norm{A}$ from~\eqref{eq:normA1} & $\norm{A}$ from~\eqref{eq:normA2} & $\norm{A}$ from~\eqref{eq:normA3} & $\norm{A}$ from~\eqref{eq:normA4} \\\midrule
     $8.2216$ & $1.2326$  & $8.2216$  & $54.8402$\\
    \bottomrule
  \end{tabular} 
  \caption{Numerical approximation of the operator norm of the Radon transform in \texttt{scikit-image} by the power method and different choices of the approximation.}
  \label{tab:table-norm-radon}
\end{table}

We apply the power iteration method to compute the norm of the linear operator in the previously mentioned example of the Radon transform implemented from the Python package \texttt{scikit-image} \cite{scikit-image} in the same setting as described above. We compute the norm using four different methods, which involve the adjoint operator in different ways. The results, reported in Table~\ref{tab:table-norm-radon}, show that the power iteration method is sensitive to errors in the adjoint operator, leading to notably different values of the norm.
This highlights the importance of accurately implementing the adjoint operator when using the power iteration. However, a method that only requires the evaluation of the operator itself could provide a more robust and reliable way to compute the norm and help to clarify the effects of errors in the adjoint operator on the power iteration. We will come back to this issue in a numerical example in Section~\ref{sec:implementation-radon}.

\subsection{Related work}
\label{sec:related-work}

Our restrictive assumptions rule out existing methods:
The storage requirement rules out the brute force approach of building a full matrix representation for the linear map by evaluating $Ae_{i}$ for all $i$ and then calculating the singular value decomposition of this representation as this requires storagy $\bigO(md)$ which can be considerably more than $\bigO(\max(m,d))$.
Methods based on sketching of the map, as done in~\cite{martinsson2002randomized} for the Schatten $p$-norm for even $p$ or in \cite{magdon2011note} for the spectral norm (which is the Schatten $\infty$-norm),
lead to lower bounds with probabilistic guarantees, 
but these depend on the size of the sketch. Further improvements of the estimate would need a refinement of the sketch and another computation of the norm of the sketched matrix.
In \cite{kong2017spectralestimation} the estimation of the spectrum of a data set is considered,
which includes the estimation of the operator norm as Schatten $\infty$-norm also as limit of the $k$-th moments.
From the main theorem (cf.~\cite[Thm.~1]{kong2017spectralestimation})
we would need the limit $k\to\infty$ of the moments,
where the estimated bound behaves super-exponential in $k$.
Choosing the moment just large enough might be not effective 
and depends on the actual values of the singular values.
The work \cite{li2014sketching} also considers the estimation of the Schatten $\infty$-norm by linear and bilinear sketching.
Crucially, they show that any approximation of the spectral norm needs a sketch that is basically almost as large as the full matrix, i.e. of order $\Omega(n^{2})$, where $n$ is the minimum of its number of rows and columns.
Another popular method for the computation of the spectral norm is the power iteration~\cite{kuczynski1992estimating} which, in its plain form, works for maps for which the input and output dimensions are equal.
To calculate the norm of a map with different input and output dimensions, one could in principle apply the power method to the map $A^{*}A$, however,
this needs that a routine for $A^{*}$ is available, which we do not assume hIere.
If the evaluation of $A$ is done by a computer program that has been written in a way such that automatic differentiation is applicable,
we could simply use that to calculate the derivative of the map $v\mapsto \norm{Av}^{2}$
(and we could even calculate $A^{*}w$ for some $w$ by applying automatic differentiation with respect to $v$ to the function  $v\mapsto \scp{w}{Av}$).
However, this is not always the case, especially for precompiled code.
Moreover, it is known that the power method converges slowly if the spectral gap (i.e. the distance between the two largest eigenvalues) is small.
Other approaches to operator norms such as in \cite{kenney1998} (focusing on condition estimation) and \cite{dudley2022} (using Monte Carlo techniques for Schatten-$p$ norms) also only consider maps with equal input and output dimension.
Methods in numerical linear algebra that avoid using adjoints (or transposes) have been studied since at least the 1990s and are known as ``adjoint-free (or transpose-free) methods'', see~\cite{chan1998transpose} for a motivation to consider such methods.
Some examples are the transpose-free quasi minimal residual method (TFQMR) from~\cite{freund1993tfqmr} and Lanczos type methods \cite{brezinski1998transpose,chan1998transpose}  which lead to the \emph{conjugate gradient squared method} (CGS)~\cite{sonneveld1989cgs}  for the solution of linear systems, and for least squares problems there is the method of random descent from~\cite{lorenz2023randomdescent}.
The recent paper \cite{boulle2024operator} treated the problem of how to learn a linear map if only evaluations of the map are available and no queries of the adjoint are available.

\subsection{Contribution}
\label{sec:contribution}

The contribution of this work is:
\begin{itemize}
\item We propose a stochastic algorithm that produces a sequence that converges to the operator norm 
\begin{align*}
    \max_{\norm{v}=1} \norm{A v}^2
\end{align*}
almost surely only using evaluations of the operator $A$, and calculation of norms and inner products.
\item Our method works for any linear map between finite dimensional Hilbert spaces, i.e. we do not need any assumptions on the dimension of the spaces or on the structure of the linear map.
\item The method has the minimal possible storage requirement $\bigO(\max(m,d))$, i.e., we only need to store two vectors of the input dimension $d$ and two vectors of the output dimension $m$ during the iteration.
\end{itemize}
We focus on the case where $A$ maps $\RR^{d}$ to $\RR^{m}$ and we work with the standard Euclidean inner product and norm. However, what we develop can be generalized to more general finite dimensional Hilbert spaces as well as to spaces over the complex numbers.

Additionally, our method produces an approximation of a right singular vector for the largest singular value and can be extended to compute further right singular vectors. Curiously, the method also allows us to check if a map $A$ is orthogonal in the sense that $A^{*}A = cI$ for some constant $c \geq 0$, where $I$ denotes the identity on the input space of $A$.  

\subsection{Notation}
\label{sec:notation}

We $\norm{x}$ and $\scp{x}{y}$ for the standard Euclidean vector norm and inner product, respectively,  $\norm{A}$ for the induced operator norm and $I_{d}$ for the identity map on $\RR^{d}$. With $\cN(\mu,\Sigma)$ we denote the normal distribution with mean $\mu$ and covariance $\Sigma$ and with $\sphere^{d-1} \coloneqq \{x\in\RR^{d}\mid \norm{x}=1\}$ we denote the unit sphere in $d$-dimensional space. With $\PP(B)$ we denote the probability of an event $B$ (and the underlying distribution will be clear from the context).
In this paper $\sigma_{k}$ will always denote the $k$-th largest singular value of the linear map induced by $A$, i.e. $\sigma_{1} = \sigma_{\max} = \norm{A}$. For a vector $v \in \RR^d$, we denote by $\{v\}^{\bot} \coloneqq \left\{ w \in \RR^d \mid \scp{w}{v}=0 \right\}$ the hyperplane orthogonal to $v$.
By $A^*$,
we denote the adjoint operator of $A$ defined by $\scp{x}{A^{*}y} = \scp{Ax}{y}$.
Similarly, we use $x^*$ for the induced linear map on the dual space (which is the transpose of the vector). We will occasionally abbreviate ``almost surely'' with a.s. which means that the described event will occur with probability one. By $\mathbf{1}$, we denote a vector of all ones (and the length will be clear from the context) and $\mathbbm{1}_{B}$ is used for the characteristic function of the set $B$.

\section{Adjoint free calculation of $\norm{A}$}
\label{sec:adjoint-free-A}

We propose a stochastic method to approximate the operator norm $\norm{A}$ for some $A$ that maps from $\RR^{d}$ to $\RR^{m}$. The method relies on the Rayleigh principle namely $\norm{A}^{2} = \max_{\norm{v}=1}\norm{Av}^{2}$. Within our assumptions, we cannot apply a simple projected gradient ascent for this problem as this would require to compute the vector $A^{*}Av$ and we do not have access to the adjoint.

The method we propose is a random search method and operates fairly simple:
\begin{enumerate}
\item Initialize some $v^{0}\in\RR^{d}$ with $\norm{v^{0}}=1$ and set $k=0$.
\item Generate $y^{k}\sim \cN(0,I_{d})$.
\item Project $y^{k}$ onto the space orthogonal to $v^k$ and normalize it 
  \begin{align*}
    x^{k} = \frac{y^{k} - \scp{y^{k}}{v^{k}}v^{k}}{\norm{y^{k} - \scp{y^{k}}{v^{k}}v^{k}}}
  \end{align*}
  to get a random normalized search direction perpendicular to $v^{k}$.\footnote{This step would break down if $y^{k}$ and $v^{k}$ are parallel---however, this happens only with probability zero.}
\item Calculate a stepsize $\tau_{k}$ by\footnote{It may happen that there exists no maximizer. However, we will argue that this case occurs with probability zero, see Proposition~\ref{prop:stepsize-formula} and Corollary~\ref{cor:no-sing-vectors-during-iteration}.} 
  \begin{align}\label{eq:argmax-tau}
    \tau_{k} 
    \in
    \argmax_{\tau \in \RR} 
    \frac{\norm{Av^{k} + \tau Ax^{k}}^{2}}{\norm{v^{k}+\tau x^{k}}^{2}},
  \end{align}
\item Update $v^{k+1} = \frac{v^{k}+\tau_kx^{k}}{\norm{v^{k}+\tau_{k}x^{k}}}$, increment $k$ and go to step 2. 
\end{enumerate}

\begin{remark}[Normalization of $v^{k}$ and $x^{k}$]
    \label{rem:norm_const}
  By design we always have that $\norm{v^{k}} = \norm{x^{k}}=1$ and $\scp{v^{k}}{x^{k}} = 0$.
\end{remark}

\begin{remark}[Critical points]
  By Rayleigh's principle, the critical points of the problem $\max_{\norm{v}=1}\norm{Av}$ are exactly the right singular vectors of $A$ and the values at these points are exactly the singular values.
  More precisely, the intersection of the eigenspace of the largest (or smallest) singular value gives exactly the global maxima (or minima). All other eigenspaces lead to saddle points.
  We will show in the course of the paper that the method we propose will (almost surely) avoid all saddle points and will converge (almost surely) to the global maximum.
\end{remark}

A particularity of this method is that the search direction is quite arbitrary, 
but the stepsize is determined exactly (and note that it is allowed to be negative).
This is different from many other stochastic methods which estimate update directions and then take a step according to a fixed stepsize schedule.

In the following, we will show that this method is simple to execute (and especially discuss how~\eqref{eq:argmax-tau} is solved in practice) and indeed uses only evaluations of $A$.
We will also show that the method will produce a sequence $(v^{k})_{k \in \NN}$ such that $\norm{Av^{k}}$ converges to $\norm{A}$ almost surely, and that the distance of $(v^k)_{k \in \NN}$ to the right singular space of $A$ corresponding to the leading singular value converges to zero.

\begin{remark}[Distribution of the $x^{k}$]\label{rem:distribution-x}
  The random vector $x^{k}$ is distributed as follows:
  Since an orthogonal projection of normally distributed random vectors is again normally distributed, the projection $y^{k } - \scp{y^{k}}{v^{k}}v^{k}$ is normally distributed on the tangent space of the unit sphere $\sphere^{d-1}$ at $v^{k}$. 
  Since $x^{k}$ is the result of the normalization of this vector, $x^{k}$ is uniformly distributed on the unit sphere in this tangent space, i.e. on a rotated version of $\sphere^{d-2}$.
\end{remark}

\subsection{Details of the algorithm}
\label{sec:details}

We start with a discussion for the computation of the stepsize $\tau_{k}$ from \eqref{eq:argmax-tau}.
Utilizing Remark~\ref{rem:norm_const}, we observe that 
$\norm{v^{k}+\tau x^{k}}^{2} 
= \norm{v^{k}}^{2} + 2\tau\scp{v^{k}}{x^{k}} + \tau^{2}\norm{x^{k}}^{2} = 1+\tau^{2}$
and hence, 
the objective in the minimization problem~\eqref{eq:argmax-tau} in the $k$th iteration reads as 
\begin{align}\label{eq:h}
  h_k(\tau) \coloneqq \frac{\norm{Av^{k} + \tau Ax^{k}}^{2}}{1+\tau^{2}}
\end{align}
 Its derivative is 
  \begin{align*}
    h_k'(\tau) 
    & = \frac{(2\scp{Av^{k}}{Ax^{k}} + 2\tau\norm{Ax^{k}}^{2})(1+\tau^{2})}{(1+\tau^{2})^{2}} \\
    & \qquad - \frac{(\norm{Av^{k}}^{2} + 2\tau\scp{Av^{k}}{Ax^{k}} + \tau^{2}\norm{Ax^{k}}^{2})\cdot2\tau}{(1+\tau^{2})^{2}}\\
    & = 2\frac{\scp{Av^{k}}{Ax^{k}} + (\norm{Ax^{k}}^{2}- \norm{Av^{k}}^{2})\tau - \scp{Av^{k}}{Ax^{k}}\tau^{2}}{(1+\tau^{2})^{2}}\\
    & = 2\frac{a_{k} + b_{k}\tau - a_{k}\tau^{2}}{(1+\tau^{2})^{2}}
  \end{align*}
where we defined 
\begin{align}
\label{eq:def-ak-bk}
a_{k} \coloneqq \scp{Av^{k}}{Ax^{k}}\quad \text{and}\quad b_{k}\coloneqq \norm{Ax^{k}}^{2} - \norm{Av^{k}}^{2}.
\end{align}
The shape of the objective $h_k$ depends on the signs of $a_{k}$ and $b_{k}$ and we illustrate the cases in Figure~\ref{fig:shape-h}.
It is clear that 
\begin{align*}
  h_k(0) = \norm{Av^{k}}^{2}\quad \text{and}\quad
\lim_{\tau\to\pm\infty} h_k(\tau) = \norm{Ax^{k}}^{2}.
\end{align*}

\pgfplotsset{width=5cm,height=4cm,
  xlabel style={xlabel=$\tau$},
  samples = 50,
  every axis plot post/.append style={orange,mark=none},
  every axis post/.append style={xmin=-5, xmax=5, ymin=0, ymax = 1.9, axis lines=middle, xtick=\empty},
  every tick label/.append style={font=\tiny}
}

\begin{figure}[htb]
  \centering
  \begin{tabular}{ccc}
    \begin{tikzpicture}
      \begin{axis}[ytick={1,1.5},yticklabels={$\norm{Av^{k}}^{2}$,$\norm{Ax^{k}}^{2}$}]
        \addplot{(1+x*1+1.5*x*x)/(1+x*x)};
        \addplot[thin,dotted] {1.5};
      \end{axis}
    \end{tikzpicture}&
    \begin{tikzpicture}
      \begin{axis}[ytick={1,1.5},yticklabels={$\norm{Av^{k}}^{2}$,$\norm{Ax^{k}}^{2}$}]
        \addplot{(1-x*1+1.5*x*x)/(1+x*x)};
        \addplot[thin,dotted] {1.5};
      \end{axis}
    \end{tikzpicture}&
    \begin{tikzpicture}
      \begin{axis}[ytick={1,1.5},yticklabels={$\norm{Av^{k}}^{2}$,$\norm{Ax^{k}}^{2}$}]
        \addplot{(1+1.5*x*x)/(1+x*x)};
        \addplot[thin,dotted] {1.5};
      \end{axis}
    \end{tikzpicture}\\\bigskip
    $a_{k}>0$, $b_{k}>0$ &
    $a_{k}<0$, $b_{k}>0$ &
    $a_{k}=0$, $b_{k}>0$\\
    \begin{tikzpicture}
      \begin{axis}[ytick={1,1.5},yticklabels={$\norm{Ax^{k}}^{2}$,$\norm{Av^{k}}^{2}$}]
        \addplot{(1.5+x*1+1*x*x)/(1+x*x)};
        \addplot[thin,dotted] {1};
      \end{axis}
    \end{tikzpicture}&
    \begin{tikzpicture}
      \begin{axis}[ytick={1,1.5},yticklabels={$\norm{Ax^{k}}^{2}$,$\norm{Av^{k}}^{2}$}]
        \addplot{(1.5-x*1+1*x*x)/(1+x*x)};
        \addplot[thin,dotted] {1};
      \end{axis}
    \end{tikzpicture}&
    \begin{tikzpicture}
      \begin{axis}[ytick={1,1.5},yticklabels={$\norm{Ax^{k}}^{2}$,$\norm{Av^{k}}^{2}$}]
        \addplot{(1.5+1*x*x)/(1+x*x)};
        \addplot[thin,dotted] {1};
      \end{axis}
    \end{tikzpicture}\\
    $a_{k}>0$, $b_{k}<0$ &
    $a_{k}<0$, $b_{k}<0$ &
    $a_{k}=0$, $b_{k}<0$\\
  \end{tabular}
  \caption{Shape of $h_k$ from~\eqref{eq:h} for different cases of the signs of $a_k$ and $b_k$.}
  \label{fig:shape-h}
\end{figure}

\begin{proposition}
  \label{prop:stepsize-formula}
    For $h_k$ defined by~\eqref{eq:h} and $a_{k}$ and $b_{k}$  defined by~\eqref{eq:def-ak-bk} it holds that:
  \begin{itemize}
  \item[i)] If $a_k \neq 0$,
    then the $\tau_{k}$ that solves~\eqref{eq:argmax-tau} is unique and given by
    \begin{align*}
      \tau_{k}
      = 
      \sign(a_k) \left(\tfrac{b_{k}}{2\abs{a_{k}}} + \sqrt{\tfrac{b_{k}^{2}}{4a_{k}^{2}}+1}\right).
    \end{align*}
  \item[ii)] 
    If $a_k = 0$, and $b_{k}<0$, then $h_k$ attains its maximum at $\tau = 0$
  \item[iii)] If $a_k=0$ and  $b_{k}>0$, then $h_{k}$ does not attain its supremum.
  \item[iv)] If $a_k=0$ and $b_{k} = 0$, then $h$ is constant.
  \end{itemize}
\end{proposition}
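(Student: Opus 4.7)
The plan is to work directly from the derivative formula for $h_k$ already computed in the excerpt, namely
\begin{equation*}
  h_k'(\tau) = 2\,\frac{-a_k\tau^2 + b_k\tau + a_k}{(1+\tau^2)^2},
\end{equation*}
so that critical points of $h_k$ are exactly the roots of the quadratic $q(\tau) \coloneqq -a_k\tau^2 + b_k\tau + a_k$. Since the denominator of $h_k'$ is strictly positive, the sign of $h_k'$ agrees with the sign of $q$, and this is what I will use to identify the maximizer.

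First I would handle the case $a_k \neq 0$. The discriminant of $q$ equals $b_k^2 + 4a_k^2 > 0$, so $q$ has two distinct real roots, which by Vieta's formulas satisfy $\tau_+\tau_- = -1$ (one positive, one negative) and $\tau_+ + \tau_- = b_k/a_k$. Applying the quadratic formula and factoring out $\operatorname{sign}(a_k)$ from the prefactor $1/(2a_k)$, the two roots can be written as
\begin{equation*}
  \operatorname{sign}(a_k)\!\left(\tfrac{b_k}{2|a_k|} \pm \sqrt{\tfrac{b_k^2}{4a_k^2}+1}\right).
\end{equation*}
Now I determine which root is the maximizer. If $a_k > 0$, then $q$ is a downward parabola, so $q > 0$ between its roots and $q < 0$ outside; hence $h_k$ is increasing between the roots and decreasing outside, so the larger root is the maximum. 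If $a_k < 0$, then $q$ is an upward parabola and the analogous sign analysis shows the smaller root is the maximum. In both cases the chosen root is the one with the ``$+$'' sign in the bracketed expression above, which yields the claimed formula. Uniqueness of $\tau_k$ is immediate because $q$ has only two real roots and only one is a local maximum of $h_k$.

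For the case $a_k = 0$, the numerator of $h_k'$ reduces to $b_k\tau$, so $h_k'(\tau) = 2b_k\tau/(1+\tau^2)^2$. Reading off the sign of $h_k'$ immediately gives the three subcases: if $b_k < 0$ then $h_k$ is strictly increasing on $(-\infty,0]$ and strictly decreasing on $[0,\infty)$, so $\tau=0$ is the unique maximizer; if $b_k > 0$ then $\tau = 0$ is a minimizer and $h_k(\tau) \to \|Ax^k\|^2$ as $\tau\to\pm\infty$ without ever attaining it (consistent with $h_k(0) = \|Av^k\|^2 < \|Ax^k\|^2$, since $a_k=0$ and $b_k>0$); if $b_k = 0$ then $h_k' \equiv 0$, so $h_k$ is constant.

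There is no real obstacle here; the only subtlety to be careful about is the sign bookkeeping that unifies the two subcases $a_k > 0$ and $a_k < 0$ into a single formula involving $\operatorname{sign}(a_k)$ and $|a_k|$. Once the correct root is identified in each sign regime, a short algebraic rewrite shows that they combine precisely into the stated expression.
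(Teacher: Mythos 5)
Your proof is correct and follows essentially the same route as the paper's: both identify the critical points of $h_k$ as the roots of the quadratic numerator of $h_k'$, observe that one root is positive and one negative, and then use the sign of $a_k$ to determine which root is the global maximizer before rewriting both cases into the unified $\sign(a_k)$ formula. You spell out the parabola sign analysis and the $a_k=0$ subcases in somewhat more detail than the paper, which leaves these as ``one can check,'' but there is no substantive difference in approach.
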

\begin{proof}
  The local extrema of $h_k$ are at the roots of the enumerator of $h_k'$, which are 
  \begin{align*}
    \tau_{\pm} 
    \coloneqq 
    \tfrac{b_{k}}{2a_{k}} \pm \sqrt{\tfrac{b_k^2}{4a_k^2}+1} 
  \end{align*}
  and it holds that $\tau_{+}>0$ and $\tau_{-}<0$. These are the only local extrema.

  If $a_k = \scp{Av^{k}}{Ax^{k}}>0$ we have $h_k'(0)>0$ and the local maximum is on the positive real line. Hence, $\tau_{+}$
  is the global maximizer of $h_k$, since the sign of $h_k'$ is defined by the quadratic function $a_k + b_k\tau - a_k\tau^2$ such that $h_k'(\tau) < 0$ for all $\tau > \tau_+$.

  If $a_k < 0$, we have $h_k'(0)<0$ and one can check that
  the global maximum of $h_k$ is attained at
  \begin{equation*}
    \tau_{-} 
    = 
    \tfrac{b_{k}}{2a_{k}} - \sqrt{\tfrac{b_k^2}{4a_k^2}+1} 
    = - \left( \tfrac{b_{k}}{2 |a_{k}|} + \sqrt{\tfrac{b_k^2}{4 a_k^2}+1} \right).
  \end{equation*}
  If $a_k = 0$, we have
  \[
    h_k(\tau) = \frac{\norm{A v^k}^2 + \tau^2 \norm{A x^k}^2}{1 + \tau^2}
    = \frac{\norm{A v^k}^2 - \norm{A x^k}^2}{1 + \tau^2} + \norm{A x^k}^2
    = -\frac{b_k}{1 + \tau^2} + \norm{A x^k}^2
  \]
  and hence the sign of $b_k$ yields the different cases.
\end{proof}

With the previous result concerning the optimal stepsize, 
we give the full algorithm in detail as Algorithm~\ref{alg:mafno-orth-exact}.
Note that the algorithm would break down
if $a_{k}=0$ since we would have to divide by zero.
However, if $a_k = 0$ and $b_k > 0$ hold,
then any $\tau \neq 0$ would lead to a strictly larger objective value,
cf.~Figure~\ref{fig:shape-h} (upper right block).
We will show that this will almost surely not occur, 
but before we do so, 
we provide a few remarks regarding the algorithm.

\begin{algorithm}[htb]
  \caption{Matrix- and adjoint free operator norms}\label{alg:mafno-orth-exact}
  \begin{algorithmic}[1]
    \State Initialize $v^{0}\in\RR^{d}$ with $\norm{v^{0}}=1$ by $v^0 = \hat v /\norm{\hat v}$, where $\hat v \sim \cN(0,I_{d})$
    \For{$k=0,1,2,\dots$}
      \State Sample $y^{k}\sim \cN(0,I_{d})$.
      \State Project
      \begin{align*}
        x^{k} = \frac{y^{k} - \scp{y^{k}}{v^{k}}v^{k}}{\norm{y^{k} - \scp{y^{k}}{v^{k}}v^{k}}}.
      \end{align*}
      \State Calculate
      \begin{align*}
        a_{k} & = \scp{Av^{k}}{Ax^{k}},&   b_{k} & = \norm{Ax^k}^2 - \norm{Av^{k}}^2.
      \end{align*}
      \State Calculate stepsize
      \begin{align*}
        \tau_{k} = \sign(a_{k})\left(\tfrac{b_{k}}{2\abs{a_{k}}} + \sqrt{\tfrac{b_{k}^{2}}{4a_{k}^{2}}+1}\right).
      \end{align*}
      \State Update
      \begin{align}
        \label{eq:v-update-step}
        v^{k+1}
        = 
        \frac{v^{k} + \tau_{k}x^{k}}{\sqrt{1+\tau_{k}^{2}}}.
      \end{align}
    \EndFor
    \State \textbf{return} estimate of $\norm{A}$:
    \[
      \norm{Av^{k}}
    \]
  \end{algorithmic}
\end{algorithm}

\begin{remark}[Convergence of the sequence $\norm{Av^{k}}$]\label{rem:conv-normAv}
  As long as $a_{k}\neq0$, the algorithm leads to a sequence $\norm{Av^{k}}$ which is strictly increasing. Since we have $\norm{Av^{k}}\leq \norm{A}$, the sequence $\norm{Av^{k}}$ is convergent as long as we have $a_{k}\neq 0$ throughout the iteration.
\end{remark}

\begin{remark}[Algorithm~\ref{alg:mafno-orth-exact} is a stochastic projected gradient method]
  Although we sample $x^{k}$ uniformly from the unit directions orthogonal to $v^{k}$, the method is still a stochastic gradient method in some sense. This is because the stepsize $\tau_{k}$ is positive for $a_k = \scp{Av^k}{Ax^k}>0$ and negative for $a_{k}<0$, and thus, we effectively move in direction $\sign(a_{k})x^{k}$. This means that the expected direction in which the algorithm moves is uniformly distributed on a half-sphere in the space orthogonal to $v^{k}$.
  This half-sphere is defined by the inequality $a_{k}>0$ which is $0<\scp{Av^{k}}{Ax^{k}} = \scp{A^{*}Av^{k}}{x^{k}}$. Due to the symmetry of the distribution, the expected direction $\EE(\sign(a_{k})x^{k})$ is a positive multiple of the projection of $A^{*}Av^{k}$ on the space orthogonal to $v^{k}$. We conclude that Algorithm~\ref{alg:mafno-orth-exact} is a stochastic ascent algorithm in the sense that the expected direction is a multiple of the projection on the gradient $2A^{*}Av^{k}$ of the objective $\norm{Av}^{2}$.
\end{remark}

\begin{remark}[Markov chain provided by $(v^k)_{k \in \NN}$]
    \label{rem:markov_chain}
    The random variables $(v^k)_{k \in \NN}$ generated by Algorithm~\ref{alg:mafno-orth-exact}
    can be understood as a sequence of measurable functions 
    $v^k : \Omega \to \sphere^{d-1}$.
    Here, $\Omega$ is a sampling space equipped with a sigma algebra $\cF$ and probability measure $\PP$
    forming a probability space $(\Omega, \cF, \PP)$.
    More precisely,
    the sequence $(v^k)_{k \in \NN}$ is a Markov-chain.
    The sampling space $\Omega$ is given by $(\sphere^{d-1})^{\NN}$
    and $\cF = \cS^{\otimes \NN}$.
    The probability measure $\PP$
    is then defined via the Markov chain rule using the defined Markov kernel equipped with the update \eqref{eq:v-update-step}
    and closed formula for $\tau_k$ from Proposition~\ref{prop:stepsize-formula}.
\end{remark}

Now
we show that $a_{k}=0$ only occurs if $v^{k}$ is a right singular vector
and that this will (almost surely) never occur during the algorithm.

\begin{lemma}
\label{lem:a=0-at-sing-vecs}
  If $v^{k}$ is not a right singular vector of $A$ and $x^{k}$ is sampled according to step 3 and 4 of Algorithm~\ref{alg:mafno-orth-exact}, then $a_{k} = \scp{Av^{k}}{Ax^{k}} \neq 0$ almost surely.
\end{lemma}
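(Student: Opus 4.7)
My plan is to exploit the fact that $a_k = \langle Av^k, Ax^k\rangle = \langle A^*Av^k, x^k\rangle$ and that $x^k$ is uniformly distributed on the unit sphere of the hyperplane $\{v^k\}^\perp$. The strategy is to identify $\{a_k = 0\}$ with the event that $x^k$ lies in a proper linear subspace of $\{v^k\}^\perp$, which has measure zero under the uniform distribution on a sphere of dimension at least zero, provided that the relevant subspace is not the whole hyperplane.

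First, I would rewrite $a_k$ using adjoints \emph{conceptually} (the algorithm does not compute adjoints, but they exist mathematically), so that $a_k = \langle A^*Av^k, x^k\rangle$. Since $x^k \in \{v^k\}^\perp$ by construction (see Remark~\ref{rem:norm_const}), only the component of $A^*Av^k$ orthogonal to $v^k$ matters: with $P \coloneqq I_d - v^k(v^k)^\top$ the projection onto $\{v^k\}^\perp$, we have $a_k = \langle P A^*Av^k, x^k\rangle$. Now I would observe that $P A^*Av^k = 0$ is equivalent to $A^*Av^k \in \operatorname{span}(v^k)$, which in turn is equivalent to $v^k$ being an eigenvector of $A^*A$, i.e., a right singular vector of $A$. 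The hypothesis excludes this, so $w \coloneqq P A^*Av^k$ is a nonzero vector in $\{v^k\}^\perp$.

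Next I would use Remark~\ref{rem:distribution-x}: conditional on the fixed $v^k$, the vector $x^k$ is uniformly distributed on the unit sphere of the $(d-1)$-dimensional subspace $\{v^k\}^\perp$. The event $\{a_k = 0\} = \{\langle w, x^k\rangle = 0\}$ intersected with this sphere is the unit sphere of the subspace $\{v^k\}^\perp \cap \{w\}^\perp$, which is a proper subspace of $\{v^k\}^\perp$ since $w \neq 0$. This intersection has dimension $d-2$, so it is a set of surface measure zero inside the $(d-2)$-dimensional sphere (assuming $d \geq 2$; the case $d=1$ is vacuous since then $\norm{A}$ is just $|A|$ and the algorithm is not needed, while for $d=2$ the sphere in $\{v^k\}^\perp$ consists of only two points, both of which satisfy $\langle w, x^k\rangle \neq 0$). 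Therefore $\PP(a_k = 0 \mid v^k) = 0$, which is the claim.

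The main obstacle is conceptual rather than technical: making sure the statement is understood as a conditional probability statement (randomness with respect to $y^k$ given $v^k$) and checking that the nonzero projection argument correctly characterizes non--singular vectors. Once this is in place, the almost-sure statement follows because a uniform distribution on a sphere assigns zero measure to any lower-dimensional great subsphere.
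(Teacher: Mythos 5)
Your proof is correct and follows essentially the same route as the paper: rewrite $a_k=\scp{A^*Av^k}{x^k}$, observe that $a_k=0$ forces $x^k$ to be orthogonal to both $v^k$ and $A^*Av^k$ (equivalently, to the nonzero projection $w$ of $A^*Av^k$ onto $\{v^k\}^\perp$), and invoke the distribution of $x^k$ from Remark~\ref{rem:distribution-x} to conclude that this lower-dimensional subsphere has measure zero. You make the projection step and the low-dimensional edge cases ($d=1,2$) more explicit than the paper, but the argument is the same.
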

\begin{proof}
  By construction it holds that $\scp{x^{k}}{v^{k}}=0$ and if we also have that $a_{k}=0$, then it holds that $0 = \scp{Av^{k}}{Ax^{k}} = \scp{A^{*}Av^{k}}{x^{k}}$. Hence, $a_{k}=0$ only holds if $x^{k}$ is orthogonal to $v^{k}$ and to $A^{*}Av^{k}$ and since $v^{k}$ is not an eigenvector of $A^{*}A$, this means that the vector $x^{k}$ is in a subspace of dimension at most $d-2$. Recalling Remark~\ref{rem:distribution-x} on the distribution of $x^{k}$, we see this restricts $x^{k}$ to a subset of measure zero on the unit sphere in the $d-1$-dimensional tangent space of $\sphere^{d-1}$ at $v^{k}$ and hence, this happens with probability zero. 
\end{proof}

By the previous lemma,
we know that $a_{k}=0$ only happens (a.s.) 
if $v^{k}$ is a right singular vector. 
Now we show that if $v^{k}$ is not a right singular vector, 
the same is true (a.s.) for $v^{k+1}$.

We start with the following observation regarding the eigenspaces of $A^*A$ for some eigenvalue $\sigma^{2}$, i.e. singular value $\sigma$ of $A$, denoted by 
\[
    E_\sigma = \left\{ x \in \RR^d \mid A^{*}Ax = \sigma^2 x\right\}.
\]

\begin{lemma}
    \label{lem:dimension-directions}
  Let $A\in\RR^{m\times d}$, $\sigma^2$ be an eigenvalue of $A^{*}A$ with multiplicity $r<d$, $v \in \sphere^{d-1}$ be no eigenvector for the eigenvalue $\sigma^{2}$. Then it holds that the set of vectors $x\in \{v\}^{\bot}$ for which $v+x\in E_{\sigma}$ holds is an affine subspace of dimension $r$ or $r-1$, or is empty.
\end{lemma}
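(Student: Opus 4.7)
The plan is to rewrite the set of interest as the intersection of an affine subspace with a hyperplane and then apply standard dimension counting for affine intersections.

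First I would observe that $v + x \in E$ is equivalent to $x \in E - v$, so the set of interest equals
\[
  S \coloneqq (E - v) \cap \{v\}^{\bot}.
\]
This is automatically an affine subspace, and since $E$ is a linear subspace of dimension $r$, the translate $E - v$ has the same dimension $r$. The question therefore reduces to how much intersecting with the hyperplane $\{v\}^{\bot}$ cuts that dimension.

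Next I would parameterize points of $E - v$ by $x = e - v$ with $e \in E$; the orthogonality constraint $\scp{x}{v} = 0$ then reads $\scp{e}{v} = \norm{v}^{2}$. Introducing the linear functional $\phi\colon E \to \RR$, $\phi(e) \coloneqq \scp{e}{v}$, this identifies $S = \phi^{-1}(\norm{v}^{2}) - v$. I would then split into two cases. If $\phi$ is not identically zero on $E$ (the generic situation, equivalent to $v$ not being orthogonal to the entire eigenspace $E$), then $\phi$ is a nonzero linear functional on an $r$-dimensional space and hence surjective, so its preimage of $\norm{v}^{2}$ is an affine subspace of $E$ of dimension $r-1$; therefore $\dim S = r - 1$. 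If $\phi \equiv 0$ on $E$, the preimage equals all of $E$ precisely when $\norm{v} = 0$, which forces $v = 0$ and gives $\dim S = r$.

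The one subtlety I expect is the corner case where $v \neq 0$ and simultaneously $v \perp E$; the hypotheses do not rule this out, and here $S = \emptyset$, so the stated dimension count has to be read vacuously. For the subsequent use of the lemma (showing that $v^{k+1}$ almost surely stays outside the eigenspace), emptiness of $S$ only strengthens the conclusion, so I would flag this case but not dwell on it. Beyond the case split, no real computation is required; the main obstacle is simply the bookkeeping to confirm that all remaining cases fall into dimension $r$ or $r-1$.
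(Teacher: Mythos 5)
Your proof is correct and takes a genuinely different --- and cleaner --- route than the paper's. The paper works in coordinates: after rotating so that $v = e_1$, it writes $E = \ker C$ for a rank-$(d-r)$ matrix $C$, splits $C = [c_1, C']$, and reduces to the linear system $C'x' = -c_1$, reading off the dimension from the rank of $C'$. Your version avoids coordinates entirely: you identify the set in question as $(E - v) \cap \{v\}^\perp$, parameterize it by $e \in E$ subject to $\scp{e}{v} = \norm{v}^2$, and count dimensions via the linear functional $\phi(e) = \scp{e}{v}$ on $E$. What this buys is transparency about the case split: the ``dimension $r$'' branch requires $\phi \equiv 0$ on $E$ together with $\norm{v} = 0$, so it never arises in the algorithm (where $\norm{v} = 1$), and if $v \neq 0$ with $v \perp E$ the set is in fact empty. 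That last point is one the paper's own proof glosses over --- in the branch $\mathrm{rank}(C') = d-r-1$, the column $c_1$ lies outside the column space of $C'$, so $C'x' = -c_1$ is inconsistent and the solution set is empty rather than of dimension $r$ as claimed. You are right that this does not affect the downstream Corollary~\ref{cor:no-sing-vectors-during-iteration}, which only needs the dimension to be at most $r < d-1$; emptiness only strengthens that bound.
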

\begin{proof}
  Since $\dim(E_{\sigma}) = r$ we can write $E_{\sigma} = \left\{ x \mid Cx = 0 \right\}$ with $C\in \RR^{d-r\times d}$ having rank $d-r$. We assume without loss of generality (by applying a suitable rotation) that $v = e_{1} = (1,0,\dots,0)^{*}$, hence $x\in \{v\}^{\bot}$ means $x = (0,x')$ with $x' \in \RR^{d-1}$. We have that $v+x\in E_{\sigma}$ exactly if $C(v+x)=0$ which is equivalent to $Cx = -Cv$. Denoting $C = \left[ c_1, C'  \right]$ with $c_{1}\in\RR^{d-r}$ and $C' \in \RR^{d-r\times d-1}$, i.e. $c_{1}$ is the first column of $C$ and $C'$ the matrix consisting of the remaining columns, we can rewrite the latter system of equations as $C' x'  = -c_{1}$. The matrix $C'$ has rank either $d-r$ or $d-r-1$ as cutting the first column might reduce the dimension of the span of those. Hence, this system has an affine solution space of dimension either $d-1 - (d-r) = r-1$ or $d-1 - (d-r-1) = r$, or might have no solution at all.
\end{proof}

\begin{corollary}
    \label{cor:no-sing-vectors-during-iteration}
  Let $d > 2$ and $A\in\RR^{m\times d}$ and assume that all eigenvalues of $A^{*}A$ have multiplicities less than $d-1$. Then it holds that if $v^{k}$ is not a right singular vector of $A$, then $v^{k+1}$ is almost surely also not a right singular vector of $A$.

  Consequently, we have $a_{k}\neq 0$ (almost surely) throughout the iteration of Algorithm~\ref{alg:mafno-orth-exact} if $v^{0}$ is not a right singular vector of $A$ and the sequence $\norm{Av^{k}}$ is increasing and convergent (a.s.).
\end{corollary}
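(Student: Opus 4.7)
The plan is to establish the one-step claim first — that if $v^{k}$ is not a right singular vector then almost surely neither is $v^{k+1}$ — and then iterate by a countable union. Since the set of right singular vectors is the finite union $\bigcup_{j}(E_{j}\cap\sphere^{d-1})$ over the eigenspaces $E_{j}$ of $A^{*}A$, I would fix a single eigenspace $E$ of multiplicity $r\leq d-2$ with $v^{k}\notin E$ and bound $\PP(v^{k+1}\in E\mid v^{k})$ by zero.

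For the reduction, I would use that $v^{k+1}$ is a positive scalar multiple of $v^{k}+\tau_{k}x^{k}$ and that $E$ is a linear subspace, so $v^{k+1}\in E$ is equivalent to $\tau_{k}x^{k}\in F$, where $F\coloneqq \{y\in\{v^{k}\}^{\bot}\mid v^{k}+y\in E\}$ is the affine subspace from Lemma~\ref{lem:dimension-directions}. The ``bad'' directions for $x^{k}$ therefore lie in $L\cap\sphere^{d-2}$, where $L\coloneqq \{tx\mid t\in\RR,\ x\in F\}$ is the linear span of $F$ inside $\{v^{k}\}^{\bot}$. Because $v^{k}\notin E$ forces $0\notin F$, the span satisfies $\dim L=\dim F+1$.

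The most delicate point is a sharp dimension count: Lemma~\ref{lem:dimension-directions} alone only gives $\dim F\leq r$, which would yield a subsphere of dimension $r$ — not strictly less than $\dim\sphere^{d-2}=d-2$ in the boundary case $r=d-2$. To close this gap I would refine the bound using $\norm{v^{k}}=1$: the condition $F\neq\emptyset$ is equivalent to the existence of $e\in E$ with $\scp{e}{v^{k}}=1$. If $v^{k}\perp E$ no such $e$ exists and $F=\emptyset$; otherwise $\{e\in E\mid \scp{e}{v^{k}}=1\}$ is a codimension-one affine slice of $E$, so $F$ has dimension exactly $r-1$. In both cases $\dim F\leq r-1\leq d-3$, hence $L\cap\sphere^{d-2}$ is either empty or a subsphere of dimension at most $d-3<d-2$, a measure-zero subset of $\sphere^{d-2}$. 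Since $x^{k}$ is uniform on the sphere in $\{v^{k}\}^{\bot}$ by Remark~\ref{rem:distribution-x}, this event has probability zero.

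Summing over the finitely many eigenspaces and then over $k\in\NN$ (a countable union of null events) shows that almost surely $v^{k}$ is never a right singular vector. Combined with Lemma~\ref{lem:a=0-at-sing-vecs} this gives $a_{k}\neq 0$ almost surely throughout the iteration, and Remark~\ref{rem:conv-normAv} then yields that $\norm{Av^{k}}$ is strictly increasing and, being bounded above by $\norm{A}$, convergent almost surely.
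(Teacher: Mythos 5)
Your proof follows the same overall strategy as the paper's: reduce the one-step claim to a measure-zero count for the bad directions in $\sphere^{d-2}\subset\{v^{k}\}^{\bot}$ using Lemma~\ref{lem:dimension-directions}, and then iterate. However, you have correctly identified and repaired a real imprecision in the paper's argument. Lemma~\ref{lem:dimension-directions} as stated only guarantees $\dim F\leq r$, and the paper's proof of the corollary simply asserts that the ``restrictive case'' $\dim F = r-1$ ``is fulfilled by assumption'' — but the hypothesis $r<d-1$ concerns only the multiplicity and by itself says nothing about which of the two dimensions from the Lemma occurs. If one only had $\dim F\leq r = d-2$, then the linear span $L$ could have dimension $d-1$, i.e.\ all of $\{v^{k}\}^{\bot}$, and $L\cap\sphere^{d-2}$ would not be a null set. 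Your refinement closes this: writing $y=e-v^{k}$ with $e\in E$ and $y\perp v^{k}$, the normalization $\norm{v^{k}}=1$ forces $\scp{e}{v^{k}}=1$, so $F$ is a codimension-one affine slice of $E$, hence either empty (if $v^{k}\perp E$) or of dimension exactly $r-1\leq d-3$. In fact your argument also sharpens Lemma~\ref{lem:dimension-directions} itself: in the case $\operatorname{rank}(C')=d-r-1$ the system $C'x'=-c_{1}$ is inconsistent (since $\operatorname{rank}(C)=d-r$), so the ``dimension $r$'' branch never actually occurs — the correct dichotomy is empty or dimension $r-1$. With that in hand, $L\cap\sphere^{d-2}$ is a subsphere of dimension at most $r-1<d-2$, hence a $\mathcal{U}_{v^{k}}$-null set, and the finite union over eigenspaces and the induction over $k$ go through exactly as in the paper. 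The concluding sentence (combine with Lemma~\ref{lem:a=0-at-sing-vecs} and Remark~\ref{rem:conv-normAv}) matches the paper's as well.
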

\begin{proof}
  Let $\sigma$ be a singular value of $A$ with multiplicity $r$ and $E$ the respective eigenspace.
  
  If $v^{k}+\tau x^{k}$ is in $E$, then (by Lemma~\ref{lem:dimension-directions}) $\hat{x} = \tau x^{k}$ is from an affine subspace $L$ of dimension at most $r$.
  Hence, the set of directions $x^{k} \in \{v^k\}^\perp \cap \sphere^{d-1}$ that may result in $v^{k+1}\in E$ is a subset of the set $\left\{x/\norm{x}  \mid x\in L \right\}$. Since $v^{k}$ and $x^{k}$ are not parallel, this is a set of probability zero (with respect to the Haar measure on $\{v^k\}^\perp \cap \sphere^{d-1}$, according to which $x^{k}$ is sampled due to Remark~\ref{rem:distribution-x}), if $r<d-1$ (which then also implies $r-1<d$). The latter (restrictive case from Lemma~\ref{lem:dimension-directions}) is fulfilled by assumption. Since this is true for all (finitely many) eigenspaces of $A^{*}A$, the claim follows.

  The monotonicity and the convergence of the sequence $\norm{Av^{k}}$ 
  is a consequence of Remark~\ref{rem:conv-normAv} under the assumption.
\end{proof}

Corollary~\ref{cor:no-sing-vectors-during-iteration} guarantees that Algorithm~\ref{alg:mafno-orth-exact} will not run into division by $a_{k}=0$ almost surely 
if all singular values of $A$ have multiplicity at most $d-2$, (which can be guaranteed, for example, if $A$ has three different singular values).
In the case $d=1$, $A$ is just a vector and $A^{*}A$ is just a scalar, so nothing is to be done. If $d = 2$ and $A$ has two different singular values,
then a simple geometric argument shows that our algorithm 
converges exactly after one step,
see Section~\ref{sec:numerical-experiments} and Fig.~\ref{fig:circle-convergence-algorithm}.
Hence, 
we make the following assumption and assume that it is fulfilled from hereon as long as nothing else is said.

\begin{assumption}
    \label{ass:ass_sing_rang}
  The operator $A$ from $\RR^{d}$ to $\RR^{m}$ has no singular value with multiplicity $d-1$ or $d$.
\end{assumption}

If we sample $v^{0}$ uniformly on $\sphere^{d-1}$, Assumption~\ref{ass:ass_sing_rang} guarantees that $a_{0}\neq 0$ almost surely and by Corollary~\ref{cor:no-sing-vectors-during-iteration} this is preserved throughout the iteration.
Later in Remarks~\ref{rem:multiplicity-d-1} and~\ref{rem:second_assump_gone} we will see that we can even drop Assumption~\ref{ass:ass_sing_rang};
hence, Algorithm~\ref{alg:mafno-orth-exact} can be applied to any linear map $A$ mapping from $\RR^d$ to $\RR^m$.

 \subsection{Almost sure convergence}
\label{sec:convergence}

Now we set out to prove that Algorithm~\ref{alg:mafno-orth-exact} will almost surely give a sequence $(v^{k})_{k \in \NN}$ such that $\norm{Av^{k}}\to\norm{A}$. 
The first step is to calculate the ascent in each iteration.
Secondly,
we discuss the convergence of $(a_k)_{k \in \NN}$.
Afterwards,
we can prove the convergence of the sequence $(\norm{A v^k}^2)$ to a singular value
and that at least one subsequence $(v^{k_j})_{k \in \NN}$ converges into the corresponding singular space (almost surely).
Finally,
the almost sure convergence towards the maximal singular value for $(\norm{A v^k})_{k \in \NN}$
is proven by considering a technical lemma, 
which allows us to uniformly bound the probability of finding optimal search directions
in all tangential hyperplanes.

\begin{lemma}
  \label{lem:ascent-formula}
  For the random variables $(v^{k})_{k \in \NN}, (a_{k})_{k \in \NN}$ 
    and $(\tau_{k})_{k \in \NN}$ from Algorithm~\ref{alg:mafno-orth-exact} it holds 
  \begin{align*}
    \norm{Av^{k+1}}^{2} - \norm{Av^{k}}^{2} 
    = 
    \tau_{k}a_{k}, 
    \qquad \forall k \in \NN,
    \quad \textrm{a.s.}
  \end{align*}
\end{lemma}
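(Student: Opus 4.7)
The plan is a direct algebraic computation exploiting the first-order optimality condition that defines $\tau_k$.

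First I would substitute the update rule $v^{k+1} = (v^k + \tau_k x^k)/\sqrt{1+\tau_k^2}$ into $\norm{Av^{k+1}}^2$, which gives exactly $h_k(\tau_k)$ in the notation of \eqref{eq:h}. Expanding the numerator using $\scp{Av^k}{Ax^k}=a_k$ and $\norm{Ax^k}^2 = \norm{Av^k}^2 + b_k$ yields
\begin{align*}
  \norm{Av^{k+1}}^2 - \norm{Av^k}^2
  &= \frac{\norm{Av^k}^2 + 2\tau_k a_k + \tau_k^2 \norm{Ax^k}^2}{1+\tau_k^2} - \norm{Av^k}^2 \\
  &= \frac{2\tau_k a_k + \tau_k^2 b_k}{1+\tau_k^2}.
\end{align*}

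Next I would invoke the optimality condition for $\tau_k$. Proposition~\ref{prop:stepsize-formula} (and the computation of $h_k'$ preceding it) shows that $\tau_k$ is a root of the polynomial $a_k + b_k\tau - a_k\tau^2$, i.e.\ $a_k(1-\tau_k^2) + b_k\tau_k = 0$, equivalently $b_k\tau_k = a_k(\tau_k^2 - 1)$. Substituting this into the numerator above gives
\begin{align*}
  2\tau_k a_k + \tau_k^2 b_k
  = 2\tau_k a_k + \tau_k \cdot a_k(\tau_k^2 - 1)
  = \tau_k a_k (1 + \tau_k^2),
\end{align*}
so the $(1+\tau_k^2)$ factor cancels with the denominator, leaving $\tau_k a_k$ as required.

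There is no real obstacle here, only a small bookkeeping point: the derivation implicitly assumes $a_k \neq 0$ so that $\tau_k$ actually satisfies the polynomial optimality condition; by Lemma~\ref{lem:a=0-at-sing-vecs} (and Corollary~\ref{cor:no-sing-vectors-during-iteration}) this holds almost surely along the iteration, and the identity is trivially true in the degenerate case $\tau_k=0$ as well.
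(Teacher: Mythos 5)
Your proposal is correct and follows essentially the same route as the paper: expand $\norm{Av^{k+1}}^2$ using the update rule, isolate the numerator $2\tau_k a_k + \tau_k^2 b_k$, and then eliminate $b_k\tau_k$ via the first-order optimality condition $a_k + b_k\tau_k - a_k\tau_k^2 = 0$ so that the $(1+\tau_k^2)$ factor cancels. Your closing remark about the implicit assumption $a_k\neq 0$ is a sensible addition, though the paper leaves it tacit here since it is addressed separately via Lemma~\ref{lem:a=0-at-sing-vecs} and Corollary~\ref{cor:no-sing-vectors-during-iteration}.
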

\begin{proof}
  We calculate the ascent in the objective: 
  \begin{align*}
    \norm{A v^{k+1}}^2 - \norm{A v^{k}}^2 & = \tfrac1{1+\tau_k^2}\norm{Av^{k} + \tau_k Ax^{k}}^2 - \norm{Av^{k}}^2\\
                                          & = \tfrac1{1+\tau_k^2}\left[\norm{Av^{k}}^{2} + 2\tau_k\scp{Av^{k}}{Ax^{k}} + \tau_k^2\norm{Ax^{k}}^2\right]- \norm{Av^{k}}^2\\
                                          & = \tfrac{2\tau_k}{1+\tau_k^{2}}\scp{Av^{k}}{Ax^{k}} + \tfrac{\tau_k^{2}}{1+\tau_k^{2}}\norm{Ax^{k}}^{2} + \left( \tfrac1{1+\tau_k^{2}} -1\right)\norm{Av^{k}}^{2}\\
                                          & = \tfrac{2\tau_k}{1+\tau_k^{2}}\scp{Av^{k}}{Ax^{k}} + \tfrac{\tau_k^{2}}{1+\tau_k^{2}}\left( \norm{Ax^{k}}^{2} - \norm{Av^{k}}^{2} \right)\\
                                          & = \tfrac{2\tau_k}{1+\tau_k^{2}}a_{k} + \tfrac{\tau_k^{2}}{1+\tau_k^{2}} b_{k}\\
                                          & = \tfrac{\tau_k}{1+\tau_k^{2}}\left( a_{k} + (a_{k} + b_{k}\tau_k) \right).
  \end{align*}
  Since we know that $\tau_k$ solves $-a_{k}\tau_k^{2} + b_{k}\tau_k + a_{k} = 0$ we replace $a_{k} + b_{k}\tau_k = a_{k}\tau_k^{2}$ and get 
  \begin{equation*}
    \norm{Av^{k+1}}^{2} - \norm{Av^{k}}^{2} = \tfrac{\tau_k}{1+\tau_k^{2}}(a_{k} + \tau_k^{2}a_{k}) = \tau_k a_{k}.
    \qedhere
  \end{equation*}
\end{proof}

Now we show that the sequence $(a_{k})_{k \in \NN}$ converges to zero almost surely.
\begin{lemma}
  \label{lem:a-to-0}
  For the random variables $(v^{k})_{k \in \NN}, (x^{k})_{k \in \NN}$ 
  and $(a_{k})_{k \in \NN}$ generated by Algorithm~\ref{alg:mafno-orth-exact}
  it holds that $a_{k}=\scp{Av^{k}}{Ax^{k}} \to 0$ almost surely as $k \to \infty$.
\end{lemma}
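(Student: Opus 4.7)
The plan is to combine the ascent formula from Lemma~\ref{lem:ascent-formula} with the explicit stepsize from Proposition~\ref{prop:stepsize-formula} to get a closed form for the per-iteration increment of $\norm{Av^{k}}^{2}$, and then exploit monotonicity and boundedness to force that increment to zero.

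First, I would compute $\tau_{k}a_{k}$ explicitly. Inserting the formula from Proposition~\ref{prop:stepsize-formula} into the ascent from Lemma~\ref{lem:ascent-formula} gives
\begin{align*}
  s_{k} \;\coloneqq\; \norm{Av^{k+1}}^{2} - \norm{Av^{k}}^{2} \;=\; \tau_{k}a_{k} \;=\; \tfrac{b_{k}}{2} + \sqrt{\tfrac{b_{k}^{2}}{4} + a_{k}^{2}},
\end{align*}
since the $\sign(a_{k})$ factors combine with $a_{k}$ to produce $\abs{a_{k}}$. In particular $s_{k}\geq 0$, consistent with the a.s.\ monotonicity established in Corollary~\ref{cor:no-sing-vectors-during-iteration}.

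Next I would use that $(\norm{Av^{k}}^{2})_{k}$ is monotone and bounded above by $\norm{A}^{2}$ (almost surely, by Corollary~\ref{cor:no-sing-vectors-during-iteration}), hence convergent. Therefore the telescoping sum $\sum_{k=0}^{\infty} s_{k} < \infty$ almost surely, which forces $s_{k}\to 0$ a.s.

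Finally, I would solve the defining relation for $a_{k}^{2}$: rearranging $s_{k} - b_{k}/2 = \sqrt{b_{k}^{2}/4 + a_{k}^{2}}$ and squaring yields
\begin{align*}
  a_{k}^{2} \;=\; s_{k}^{2} - s_{k}b_{k} \;=\; s_{k}(s_{k} - b_{k}).
\end{align*}
The key observation is that $b_{k} = \norm{Ax^{k}}^{2} - \norm{Av^{k}}^{2}$ is uniformly bounded by $\norm{A}^{2}$ in absolute value, so $(s_{k} - b_{k})$ is a bounded sequence. Together with $s_{k}\to 0$ a.s., this gives $a_{k}^{2}\to 0$ a.s., and hence $a_{k}\to 0$ a.s.

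The only subtle point I expect is the last extraction step: since the stepsize $\tau_{k}$ can in principle be arbitrarily large when $b_{k}\gg \abs{a_{k}}$, knowing $\tau_{k}a_{k}\to 0$ does not by itself imply $a_{k}\to 0$. The explicit identity $a_{k}^{2}=s_{k}(s_{k}-b_{k})$ together with the a priori bound on $b_{k}$ is what resolves this; everything else is a direct manipulation of the formulas already derived.
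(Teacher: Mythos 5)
Your proposal is correct and takes essentially the same route as the paper: both arguments reduce to the identity $a_k^2 = \tau_k a_k(\tau_k a_k - b_k)$ (you obtain it by squaring the explicit stepsize formula, the paper obtains it directly from the fact that $\tau_k$ is a root of the quadratic $-a_k\tau^2 + b_k\tau + a_k = 0$), and both then conclude from $\tau_k a_k \to 0$ and the uniform bound $\abs{b_k} \le \norm{A}^2$.
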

\begin{proof}
  First observe that since $\norm{Av^{k}}^{2}$ is convergent by Corollary~\ref{cor:no-sing-vectors-during-iteration}, Lemma~\ref{lem:ascent-formula} proves that $\tau_{k}a_{k}\to 0$.
  Second, we have by definition that
  $a_{k} = a_{k}\tau_k^{2} - b_{k}\tau_{k} = \tau_{k}(a_{k}\tau_{k} - b_{k})$. 
  We multiply by $a_{k}$ 
  and get 
  \begin{align}
    \label{eq:ab_tau}
    a_k\tau_k(a_{k}\tau_{k}- b_k) = a_k^{2},
    \qquad k \in \NN, \quad \textrm{a.s.}
  \end{align}
  Since $\abs{b_{k}} \leq \max\{\norm{A v^k}, \norm{A x^k}\} \leq \norm{A}$ is bounded, 
  the left-hand side vanishes as $k \to \infty$, which proves the claim.
\end{proof}

\begin{lemma}
\label{lem:E_a_k_sqr}
    It holds that
    \begin{align*}
        \EE\big[ a_k^2 \mid v^k \big] = \frac{1}{d-1}\big\| \big(I_d-v^k(v^k)^*\big) A^*Av^k \big\|^2.
    \end{align*}
\end{lemma}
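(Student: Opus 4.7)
The plan is to rewrite $a_k$ using the adjoint identity $\langle Av^k, Ax^k\rangle = \langle A^*Av^k, x^k\rangle$ so that the conditional second moment becomes a bilinear form in $x^k$, and then to evaluate the conditional covariance matrix of $x^k$ using the fact (from Remark~\ref{rem:distribution-x}) that $x^k$ is uniform on the unit sphere of the $(d-1)$-dimensional subspace $\{v^k\}^\bot$.

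First I would write
\begin{equation*}
  a_k^2 = \langle A^*Av^k, x^k\rangle^2 = (A^*Av^k)^* \, x^k (x^k)^* \, A^*Av^k,
\end{equation*}
so that, conditionally on $v^k$,
\begin{equation*}
  \EE\big[a_k^2 \mid v^k\big] = (A^*Av^k)^* \, \EE\big[x^k (x^k)^* \mid v^k\big] \, A^*Av^k.
\end{equation*}
The central step is then to identify $\EE[x^k(x^k)^* \mid v^k]$. Setting $P := I_d - v^k(v^k)^*$, this is the orthogonal projector onto the $(d-1)$-dimensional subspace $\{v^k\}^\bot$ where $x^k$ lives uniformly on the unit sphere. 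By rotational symmetry, the matrix $\EE[x^k (x^k)^*\mid v^k]$ commutes with every orthogonal transformation that fixes $v^k$, hence is of the form $\alpha P + \beta v^k(v^k)^*$. Since $x^k \in \{v^k\}^\bot$ almost surely we have $\beta = 0$, and taking traces gives $\alpha(d-1) = \EE[\norm{x^k}^2 \mid v^k] = 1$, so
\begin{equation*}
  \EE\big[x^k (x^k)^* \mid v^k\big] = \tfrac{1}{d-1} P = \tfrac{1}{d-1}\big(I_d - v^k(v^k)^*\big).
\end{equation*}

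Plugging this into the previous display and using that $P$ is an orthogonal projector (so $P = P^*P$), I would conclude
\begin{equation*}
  \EE\big[a_k^2 \mid v^k\big] = \tfrac{1}{d-1} (A^*Av^k)^* P A^*Av^k = \tfrac{1}{d-1}\big\|\big(I_d - v^k(v^k)^*\big) A^*Av^k\big\|^2,
\end{equation*}
which is the claim. The only step that requires any care is the symmetry/trace argument for the conditional covariance of $x^k$; everything else is algebraic manipulation. No serious obstacle is expected.
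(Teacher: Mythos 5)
Your proposal is correct and follows essentially the same route as the paper: write $a_k^2$ as the quadratic form $(A^*Av^k)^*\,x^k(x^k)^*\,A^*Av^k$, pull the expectation onto $\EE[x^k(x^k)^*\mid v^k]=\tfrac{1}{d-1}(I_d-v^k(v^k)^*)$, and use that the projector is idempotent. The paper simply asserts the covariance identity ``by symmetry arguments,'' whereas you actually justify it via the commutation and trace argument, so your write-up is if anything a little more complete than the original.
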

\begin{proof}
By symmetry arguments (see also Remark~\ref{rem:distribution-x}) we have that 
\begin{align*} 
\EE[x^k(x^k)^* \mid v^k] = \frac{1}{d-1}\big(I_d-v^k(v^k)^*\big).
\end{align*}
It follows that
\begin{align*}
    \EE\big[ a_k^2 \mid v^k \big] 
    &= \EE\big[ \scp{ Av^k}{Ax^k}^2 \mid v^k \big] \\
    &= \EE\big[ (v^k)^*A^*Ax^k(x^k)^*A^*Av^k \mid v^k \big] \\
    &= (v^k)^* A^*A \cdot \EE[x^k(x^k)^* \mid v^k] \cdot A^*Av^k
\end{align*}
and the assertion follows,
since $I_d - v^k(v^k)^*$ is idempotent.
\end{proof}

Next, we show the existence of a subsequence of the random variables $(v^k)_{k \in \NN}$
generated by Algorithm~\ref{alg:mafno-orth-exact}
converging into a singular space, almost surely.
Crucially,
we do not have the existence of accumulation points of the sequence $(v^k)_{k \in \NN}$
of random variables in general.
Therefore,
we define the distance of a set $C$ and a point by
\[
    \dist(C, x) \coloneqq \inf_{y \in C} \; \norm{x - y}.
\]
Recall that the infimum is attained by some $y \in C$ if $C$ is closed and convex in finite dimensions.

\begin{proposition}
    \label{prop:acc-point-sing-vec}
    Let $(v^k)_{k \in \NN}$ be the sequence of random variables 
    generated by Algorithm~\ref{alg:mafno-orth-exact}.
    Then,
    there exists a subsequence $(v^{k_j})_{j \in \NN}$
    and a real valued random variable $\sigma^2$ with values in the set of singular values of $A$
    such that 
    \[
        \dist(E_{\sigma}, v^{k_j}) \to 0, \quad j \to \infty,
        \quad \textrm{and} \quad 
        \norm{A v^k} \nearrow \sigma,
        \quad k \to \infty,
        \quad \textrm{a.s.}
    \]
\end{proposition}
\begin{proof}
    By Lemma~\ref{lem:a-to-0} we have $a_k^2 \to 0$ for $k \to \infty$ almost surely. 
    As $a_k^2 \leq \|A\|^4$, 
    Lebesgue's dominated convergence theorem shows that $\EE[a_k^2]\to 0$ for $k \to \infty$. 
    Hence, 
    by Lemma~\ref{lem:E_a_k_sqr} 
    and the law of total expectation we obtain
    \begin{align}
        \label{eq:e_a_k_P_v}
        \frac{1}{d-1} \EE\Big[ \|\big(I_d-v^k(v^k)^*\big)A^*A v^k\|^2 \Big] = 
        \EE\Big[\EE\big[ a_k^2 \mid v^k \big]\Big] =
        \EE\big[ a_k^2\big] \to 0,
    \end{align}
    for $k \to \infty$.
    This, in turn, shows the existence
    of a subsequence 
    such that
    \[
        \|\big(I_d-v^{k_j}(v^{k_j})^*\big)A^*A v^{k_j}\| \to 0
        \qquad 
        j \to \infty,
        \quad \textrm{a.s.}
    \]
    Equivalently,
    we have that
    \[
        \textrm{dist}(\ker(I_d - v^{k_j}(v^{k_j})^*), A^*A v^{k_j}) 
        = \textrm{dist}(\Spann(v^{k_j}), A^*A v^{k_j}) 
        \to 0,
        \quad j \to \infty,
        \quad \textrm{a.s.}
    \]
    Hence,
    there exists a sequence $(\sigma_j)_{j \in \NN}$ of random variables
    such that 
    \[
        \norm{A^*A v^{k_j} - \sigma_j^2 v^{k_j}}
        \to 0,
        \qquad j \to \infty,
        \quad \textrm{a.s.}
    \]
    By the almost sure convergence of the sequence $(\norm{A v^k})_{k \in \NN}$,
    cf.~Corollary~\ref{cor:no-sing-vectors-during-iteration},
    we also have the almost sure convergence of the
    $(\norm{A v^{k_j}})_{k \in \NN}$,
    and hence the almost sure convergence of $\sigma_j \to \sigma$ 
    as $j \to \infty$.
    This implies
    \[
        \norm{A^*A v^{k_j} - \sigma^2 v^{k_j}}
        \to 0,
        \qquad j \to \infty,
        \quad \textrm{a.s.}
    \]
    The latter yields that $\sigma$ takes values in the singular values of $A$.
    Hence, we have $\dist(E_\sigma, v^{k_j}) \to 0$ for $j \to \infty$ almost surely,
    which proves the first claim.
    
    The almost sure convergence of the entire sequence $\norm{A v^k}^2 \to \sigma$ as $k \to \infty$
    follows by the monotonicity since for any $k \in \NN$ there exists $j \in \NN$ such that $k_j < k < k_{j+1}$,
    which finishes the proof.
\end{proof}

Since we now know that there exists a subsequence 
converging to a right singular space
and that $\norm{A v^{k}}$ is strictly ascending,
we can conclude that we will reach the global maximum $\norm{A}$, 
once we are past the second largest singular value
and moreover, 
any convergent subsequence of $v^k$ is then converging to the singular space for the largest singular value.

\begin{lemma} 
  \label{lem:B_Lemma}
   Let $(v^k)_{k \in \NN}$ be the sequence of random variables generated by Algorithm~\ref{alg:mafno-orth-exact}
    and 
    \begin{align*} 
        B \coloneqq \{ v \in\sphere^{d-1}: \norm{A v}^2 > \sigma_2^2\}.
    \end{align*}
  Then, the following holds:
  \begin{enumerate}
  \item[(i)] If $v^{k}\in B$, then $v^{k+1}\in B$.
  \item[(ii)] If $v^{k_0}\in B$ holds for some $k_0$, then holds $\dist(E_{\sigma_1}, v^k) \to 0$ for $k \to \infty$ almost surely,
        i.e. $\lim_{k\to\infty} \norm{A v^{k}} = \sigma_1 = \norm{A}$ almost surely.
  \end{enumerate}
\end{lemma}
\begin{proof}\mbox{}
  \begin{itemize}
  \item[(i)] 
    This holds, 
    since $\norm{Av^{k}}^2$ is increasing by construction, see Corollary~\ref{cor:no-sing-vectors-during-iteration}.
  \item[(ii)] 
    By Proposition~\ref{prop:acc-point-sing-vec}
    and utilizing (i),
    we conclude that at least the subsequence $(v^{k_j})_{j \in \NN} \subset (v^k)_{k \in \NN}$ converges almost surely into $E_{\sigma_1}$,
    i.e. it holds that $\dist(E_{\sigma_1}, v^{k_j}) \to 0$ for $j \to \infty$ almost surely.
    
    Now,
    using that almost every $v^k$ can be decomposed 
    into suitable orthonormal right singular vectors of $A$
    with random variables, i.e. coordinate vectors, $\alpha^k \in \sphere^{s-1}$,
    with $v^k = \sum_{i = 1}^s \alpha_i^k v_i^k$ 
    where $s$ is the number of distinct singular values of $A$ and a choice of right singular vectors $v_i^k \in \sphere^{d-1}$.
    To this end, and by assumption, holds
    \[
        \sigma_2^2 
        < \norm{A v^k}^2
        = \norm[\big]{\sum_{i = 1}^s \alpha_i^k A v_i^k}^2
        = \sum_{i = 1}^s (\alpha_i^k)^2 \sigma_i^2
        \leq \sigma_1^2
        \qquad \forall k \geq k_0,
        \quad \textrm{a.s}
    \]
    From the convergence to $\sigma_1$ (see Proposition~\ref{prop:acc-point-sing-vec})
    follows the almost sure convergence of the coordinates of $\alpha^k$ to zero
    which do not correspond to the leading singular value,
    i.e. for all $j \neq 1$ we obtain $\alpha_j^k \to 0$ 
    for $k \to \infty$ almost surely
    and hence $\dist(E_{\sigma_1}, v^k) \to 0$, as $k \to \infty$ a.s.
    \qedhere
  \end{itemize}
\end{proof}

Now comes a technical lemma that will help us to prove that in each step there is a non-zero probability, which is bounded from below by an absolute constant, that the next iterate will fulfill $\norm{Av^{k+1}}^{2}>\sigma_{2}^{2}$.
Roughly speaking, the lemma says that the probability to reach a neighborhood of two antipodal points by a step in a direction taken uniformly at random with appropriate stepsize has an absolute lower bound that does depend neither on the point where we start nor on the point at which we aim.

\begin{figure}
  \centering
    \pgfmathsetmacro{\azimuth}{120}
  \pgfmathsetmacro{\elevation}{30}
  \tdplotsetmaincoords{\azimuth}{\elevation}
  \begin{tikzpicture}[
    scale=1.8,
    tdplot_main_coords
    ,declare function = {
      spherex(\azimuth,\elevation) = cos(\elevation) * cos(\azimuth); 
      spherey(\azimuth,\elevation) = cos(\elevation) * sin(\azimuth); 
      spherez(\azimuth,\elevation) = sin(\elevation); 
    }
    ]
    \draw[->,thick] (0,0,0) -- (-1.5,0,0)node[left]{$x_{2}$};
    \draw[->,thick] (0,0,0) -- (0,1.7,0)node[right]{$x_{3}$};
    \draw[->,thick] (0,0,0) -- (0,0,2)node[right]{$x_{1}$};
    \foreach \longitude in {0,15,...,345} {
      \draw[
      domain = -90:90,
      variable = \latitude,
      very thin, black!30
      ] plot (
      {spherex(\longitude, \latitude)}
      ,{spherey(\longitude, \latitude)}
      ,{spherez(\longitude, \latitude)}
      );
    }
    \foreach \latitude in {-90,-75,...,75} {
      \draw[
      domain = 0:360,
      variable = \longitude,
      very thin, black!30
      ] plot (
      {spherex(\longitude, \latitude)}
      ,{spherey(\longitude, \latitude)}
      ,{spherez(\longitude, \latitude)}
      );
    }
    \draw[
    tdplot_screen_coords
    ] (0,0) circle[radius = 1];
    \newcommand{\thet}{10}
    \pgfmathsetmacro{\azimuthA}{90}
    \pgfmathsetmacro{\elevationA}{0}
    \pgfmathsetmacro{\azimuthB}{180}
    \pgfmathsetmacro{\elevationB}{90-\thet}
    \pgfmathsetmacro{\azimuthC}{0}
    \pgfmathsetmacro{\elevationC}{90-\thet}
    \newcommand{\alpp}{30}
    \pgfmathsetmacro{\azimuthD}{90}
    \pgfmathsetmacro{\elevationD}{\alpp}

    \pgfmathsetmacro{\Ax}{spherex(\azimuthA, \elevationA)}
    \pgfmathsetmacro{\Ay}{spherey(\azimuthA, \elevationA)}
    \pgfmathsetmacro{\Az}{spherez(\azimuthA, \elevationA)}

    \pgfmathsetmacro{\Bx}{spherex(\azimuthB, \elevationB)}
    \pgfmathsetmacro{\By}{spherey(\azimuthB, \elevationB)}
    \pgfmathsetmacro{\Bz}{spherez(\azimuthB, \elevationB)}

    \pgfmathsetmacro{\Cx}{spherex(\azimuthC, \elevationC)}
    \pgfmathsetmacro{\Cy}{spherey(\azimuthC, \elevationC)}
    \pgfmathsetmacro{\Cz}{spherez(\azimuthC, \elevationC)}

    \pgfmathsetmacro{\Dx}{spherex(\azimuthD, \elevationD)}
    \pgfmathsetmacro{\Dy}{spherey(\azimuthD, \elevationD)}
    \pgfmathsetmacro{\Dz}{spherez(\azimuthD, \elevationD)}

    \tdplotsetthetaplanecoords{0}
    \tdplotdrawarc[tdplot_rotated_coords,red]{(0,0,0)}{.35}{0}{-\thet}{left}{$\theta$}
    \fill[
    shift = {(\Ax,\Ay,\Az)},tdplot_screen_coords,blue!80] (0,0) circle(0.7pt)node[right]{$v$};
    \fill[
    shift = {(\Bx,\By,\Bz)},tdplot_screen_coords] (0,0) circle(0.7pt)node[left]{$\tilde{v}^{+}$};
    \draw[thick] (0,0,0) -- (\Bx,\By,\Bz);
    \fill[
    shift = {(\Cx,\Cy,\Cz)},tdplot_screen_coords] (0,0) circle(0.7pt)node[right]{$\tilde{v}^{-}$};
    \draw[thick] (0,0,0) -- (\Cx,\Cy,\Cz);
    \fill[
    shift = {(\Dx,\Dy,\Dz)},tdplot_screen_coords,green!80!black] (0,0) circle(0.7pt)node[right]{$v'$};

    \tdplotcrossprod(\Ax,\Ay,\Az)(\Bx,\By,\Bz)
    \pgfmathsetmacro{\thetheta}{atan2(\tdplotresy, \tdplotresx)}
    \pgfmathsetmacro{\thephi}{atan2(\tdplotresz, sqrt((\tdplotresy)^2 + (\tdplotresx)^2))}
    \tdplotsetrotatedcoords{\thetheta}{90-\thephi}{0}
    \draw[tdplot_rotated_coords,blue!80] (0,0,0) ++ (\Ax,\Ay,\Az) arc (-90:0:-1);

    \tdplotcrossprod(\Ax,\Ay,\Az)(\Cx,\Cy,\Cz)
    \pgfmathsetmacro{\thetheta}{atan2(\tdplotresy, \tdplotresx)}
    \pgfmathsetmacro{\thephi}{atan2(\tdplotresz, sqrt((\tdplotresy)^2 + (\tdplotresx)^2))}
    \tdplotsetrotatedcoords{\thetheta}{90-\thephi}{0}
    \draw[tdplot_rotated_coords,blue!80] (0,0,0) ++ (\Ax,\Ay,\Az) arc (-90:0:-1);

    \draw[gray!90, thin]
        plot[domain=90:0, samples=50]
        ({0},{cos(\x)}, {sin(\x)});

    \pgfmathsetmacro{\dotDB}{\Dx*\Bx + \Dy*\By + \Dz*\Bz}
    \pgfmathsetmacro{\dotDC}{\Dx*\Cx + \Dy*\Cy + \Dz*\Cz}
    \pgfmathsetmacro{\dotDBcl}{max(min(\dotDB,1),-1)}
    \pgfmathsetmacro{\dotDCcl}{max(min(\dotDC,1),-1)}
    \pgfmathsetmacro{\omegaDB}{acos(\dotDBcl)}
    \pgfmathsetmacro{\omegaDC}{acos(\dotDCcl)}
    \pgfmathsetmacro{\sinomegaDB}{sin(\omegaDB)}
    \pgfmathsetmacro{\sinomegaDC}{sin(\omegaDC)}

    \draw[green!80!black,domain=0:1,variable=\t,smooth]
    plot
    (
    { ( ( sin((1-\t)*\omegaDB) / \sinomegaDB ) * \Dx + ( sin(\t*\omegaDB) / \sinomegaDB ) * \Bx ) }
    ,
    { ( ( sin((1-\t)*\omegaDB) / \sinomegaDB ) * \Dy + ( sin(\t*\omegaDB) / \sinomegaDB ) * \By ) }
    ,
    { ( ( sin((1-\t)*\omegaDB) / \sinomegaDB ) * \Dz + ( sin(\t*\omegaDB) / \sinomegaDB ) * \Bz ) }
    );
    
    \draw[green!80!black,domain=0:1,variable=\t,smooth]
    plot
    (
    { ( ( sin((1-\t)*\omegaDC) / \sinomegaDC ) * \Dx + ( sin(\t*\omegaDC) / \sinomegaDC ) * \Cx ) }
    ,
    { ( ( sin((1-\t)*\omegaDC) / \sinomegaDC ) * \Dy + ( sin(\t*\omegaDC) / \sinomegaDC ) * \Cy ) }
    ,
    { ( ( sin((1-\t)*\omegaDC) / \sinomegaDC ) * \Dz + ( sin(\t*\omegaDC) / \sinomegaDC ) * \Cz ) }
    );

    \tdplotsetrotatedcoords{0}{0}{90}
    \fill[tdplot_rotated_coords,thick,red,red,opacity=0.2] (0,0,{cos(\thet)}) circle[radius = {sin(\thet)}];
    \draw[tdplot_rotated_coords,thick,red] (0,0,{cos(\thet)}) circle[radius = {sin(\thet)}]node[above right=2mm]{$B_{\theta}(\tilde{v})$};

    \tdplotsetthetaplanecoords{0}
    \tdplotdrawarc[tdplot_rotated_coords,red]{(0,0,0.95)}{.35}{-\thet}{0}{left}{$\beta$}

  \end{tikzpicture}

    \caption{%
    Visualization in 3d
    of the situation in the proof of Lemma~\ref{lem:inf_probability}.
    }
    \label{fig:sphere_tangent_cone}
  \end{figure}

  \begin{lemma} \label{lem:inf_probability}
  Let $\theta>0$ and $v,\tilde v\in\sphere^{d-1}$ and define the hyperspherical cap around $\tilde{v}$ with polar angle $\theta$ by
    \begin{align*}
      B_{\theta}(\tilde{v}) &\coloneqq \left\{ w\in\sphere^{d-1}\ \middle|\ \scp{w}{\tilde{v}} \geq \cos(\theta) \right\} 
    \end{align*}
    further define
    \begin{align*}
    D_v & := \left\{x\in \{v\}^{\bot} \,\cap\, \sphere^{d-1} \;\middle|\;  \frac{v+\tau x}{\norm{v+\tau x}} \in B_{\theta}(\tilde v)\cup B_{\theta}(-\tilde v) \text{ for some } \tau\in\RR\right\}
  \end{align*}
  and denote by $\mathcal{U}_{v}$ the uniform distribution on the unit sphere in $\{v\}^{\bot}$. 
 Then there exists a constant $p_{\theta,d}$ independent of $v$ and $\tilde{v}$ such that 
  \begin{align}\label{eq:def_p_varepsilon}
    \inf_{v\in\sphere^{d-1}} \mathcal{U}_v\big( D_v \big) = p_{\theta,d}>0.
  \end{align}
\end{lemma}

\begin{proof}
  To estimate the probability, consider the sketch in Figure~\ref{fig:sphere_tangent_cone}.
    The set $D_v$ of directions perpendicular to $v$ that point towards the neighborhood $B_{\theta}(\tilde{v})$ of $\tilde{v}$ is a hyperspherical cap within the unit sphere in $\{v\}^\bot$. 
    This cap is described by a polar angle, we are interested in the one which is described by a lower bound of those. 
    Notably, we can equivalently describe this cap via all great circles through $v$ that intersect $B_{\theta}(\tilde{v})$.
    These great circlesare the edges of the projected cone (onto the $x_{1}$-...-$x_{d-1}$-plane), which define the hyperspherical cap.
    An estimate from below for the polar angle of this hyperspherical cap is given by the angle between the two great circles through $v$ that go through the points $\tilde{v}$ and $\tilde{v}^-$ in the sketch, respectively (this angle is denoted $\beta$ in Figure~\ref{fig:sphere_tangent_cone}).
    As $v$ approaches the equator perpendicular of $\tilde{v}$, the angle $\beta$ decreases (see Figure~\ref{fig:sphere_tangent_cone} and consider the vectors $v$ and $v'$). 
    At the equator, it attains the lower bound of $\theta$ (viewing the situation projected onto the $x_{1}$-...-$x_{d-1}$-plane).

    Consequently, the value $\inf_{v\in\sphere^{d-1}} \mathcal{U}_{v}(D_{v})$ is lower bounded by the weighted area of a hyperspherical cap in a sphere $\sphere^{d-2}$ with angle $\theta$. This weighted area is known to be expressed via the regularized incomplete Beta function (see, e.g.~\cite{frankl1990some})
    \begin{align*}
        p_{\theta,d} \geq \mathrm{I}_{\sin(\theta)^{2}}(\tfrac{d-2}2,\tfrac12)>0.
    \end{align*}
\end{proof}

\begin{remark}
  As shown in Lemma~\ref{lem:lower-bound-reg-inc-Beta} in Appendix \ref{sec:appendix} for $d\geq 3$ it holds that 
  \begin{align*}
   \mathrm{I}_{\sin(\theta)^{2}}(\tfrac{d-2}2,\tfrac12) \geq \frac{1}{2\sqrt{\pi d}}\sin(\theta)^{d-2}\quad \textnormal{and thus,}\quad  p_{\theta,d}\geq \frac{1}{2\sqrt{\pi d}}\sin(\theta)^{d-2}.
  \end{align*}
\end{remark}

Now we are able to prove the main theorem about almost sure convergence of the algorithm,
i.e. $\norm{A v^k} \to \norm{A}$ for $k \to \infty$ a.s.

\begin{theorem}
  \label{thm:convergence_almost_surely}
  Let $(v^k)_{k \in \NN}$ be the sequence of random variables
    generated by Algorithm~\ref{alg:mafno-orth-exact}.
    It holds $\lim_{k\to\infty} \norm{A v^{k}} = \sigma_1$ almost surely.
    Moreover, if the singular space to $\sigma_1$ is one-dimensional, then $(v^{k})_{k \in \NN}$ converges almost surely to a right singular vector of $\sigma_1$. 
\end{theorem}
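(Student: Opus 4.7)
The plan is to first establish $\norm{Av^k}\to\sigma_1$ a.s.\ by showing that almost surely some iterate $v^k$ enters the set $B$ from Lemma~\ref{lem:B_Lemma}, which then forces convergence to $\sigma_1$ by Lemma~\ref{lem:B_Lemma}(ii). The convergence of $v^k$ itself when $\sigma_1$ is simple will require an additional argument to rule out oscillation between the two unit right singular vectors.

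For the first claim, I fix a unit right singular vector $v_1^*$ of $\sigma_1$. Since $v\mapsto\norm{Av}^2$ is continuous and $\norm{Av_1^*}^2=\sigma_1^2>\sigma_2^2$, I choose $\varepsilon>0$ such that $B_\varepsilon(v_1^*)\cup B_\varepsilon(-v_1^*)\subset B$. Lemma~\ref{lem:inf_probability} then gives a uniform lower bound $p_{\varepsilon,d}>0$ on the probability (conditional on $v^k$) that $x^k\in D_{v^k}$, i.e.\ that some $\tau\in\RR$ makes $(v^k+\tau x^k)/\norm{v^k+\tau x^k}$ land in $B_\varepsilon(v_1^*)\cup B_\varepsilon(-v_1^*)$. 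On this event, since $\tau_k$ maximizes $h_k$, we get $\norm{Av^{k+1}}^2=h_k(\tau_k)\geq h_k(\tau)>\sigma_2^2$, so $v^{k+1}\in B$. Iterating, $\PP(v^j\notin B\text{ for all }j\leq K)\leq(1-p_{\varepsilon,d})^K\to 0$ as $K\to\infty$, so a.s.\ $v^k\in B$ for some $k$, and Lemma~\ref{lem:B_Lemma}(ii) yields $\norm{Av^k}\to\sigma_1$ a.s.

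For the second claim, assume the singular space of $\sigma_1$ is spanned by $v_1^*$. Proposition~\ref{prop:acc-point-sing-vec} together with $\norm{Av^k}\to\sigma_1$ implies every accumulation point of $(v^k)$ lies in $\{v_1^*,-v_1^*\}$, so by compactness of $\sphere^{d-1}$ we have $\mathrm{dist}(v^k,\{v_1^*,-v_1^*\})\to 0$ a.s. To prevent oscillation between the two antipodal points I will show $\tau_k\to 0$. For $v^k$ close to $\pm v_1^*$, any $x^k\in\{v^k\}^\perp$ must be nearly orthogonal to $v_1^*$; decomposing $x^k=\alpha_k v_1^*+\beta_k\tilde w_k$ with $\tilde w_k$ a unit vector in $\{v_1^*\}^\perp$ and using $\scp{Av_1^*}{A\tilde w_k}=\sigma_1^2\scp{v_1^*}{\tilde w_k}=0$ gives $\norm{Ax^k}^2=\alpha_k^2\sigma_1^2+\beta_k^2\norm{A\tilde w_k}^2\leq\sigma_2^2+o(1)$, since $\alpha_k\to 0$ and $\norm{A\tilde w_k}\leq\sigma_2$. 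Hence $b_k=\norm{Ax^k}^2-\norm{Av^k}^2\leq\sigma_2^2-\sigma_1^2+o(1)$ is bounded away from zero from below. Since $a_k\to 0$ a.s.\ by Lemma~\ref{lem:a-to-0}, the quantity $r_k:=b_k/(2\abs{a_k})\to-\infty$, and the stepsize formula from Proposition~\ref{prop:stepsize-formula} simplifies to $\tau_k=\sign(a_k)(r_k+\sqrt{r_k^2+1})\to 0$.

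Finally, $\norm{v^{k+1}-v^k}^2=2-2/\sqrt{1+\tau_k^2}\to 0$ a.s., and because $v_1^*$ and $-v_1^*$ lie at distance $2$, the combination of $\mathrm{dist}(v^k,\{v_1^*,-v_1^*\})\to 0$ with $\norm{v^{k+1}-v^k}\to 0$ forces $(v^k)$ to eventually stay in an arbitrarily small neighborhood of exactly one of them, which gives convergence. The main obstacle is this second part, specifically the verification that $\tau_k\to 0$: this requires combining the spectral-gap estimate on $\norm{Ax^k}^2$ with the explicit stepsize formula and the asymptotic $a_k\to 0$, and then translating the resulting small-increment control into convergence via a geometric separation argument.
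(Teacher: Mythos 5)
Your argument for the first claim ($\norm{Av^k}\to\sigma_1$ a.s.) is essentially the paper's: fix a right singular vector $v_1^*$, choose $\varepsilon$ so that $B_\varepsilon(\pm v_1^*)\cap\sphere^{d-1}\subset B$, invoke Lemma~\ref{lem:inf_probability} for a uniform lower bound on the probability of sampling a direction that leads into $B$, and use the exact line search together with the nesting $A_{k+1}\subset A_k$ from Lemma~\ref{lem:B_Lemma}(i) to get a geometrically decaying bound on $\PP(v^k\notin B)$. The paper carries this out slightly more formally via integration and Fubini, but the structure and the key lemmas are identical.

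Your argument for the second claim is genuinely different from the paper's. The paper rules out oscillation between $v_1^*$ and $-v_1^*$ by a geometric observation: consecutive iterates always satisfy $\scp{v^{k+1}}{v^k}=1/\sqrt{1+\tau_k^2}>0$ (so lie on a common hemisphere), and since $\norm{Av^k}$ is monotone and $B_\varepsilon(-v_1^*)$ cannot be reached from $B_\varepsilon(v_1^*)$ without an intermediate drop in the objective, the sequence cannot accumulate at both antipodes. You instead show $\tau_k\to0$ a.s.\ by combining Lemma~\ref{lem:a-to-0} ($a_k\to0$) with a spectral-gap estimate giving $b_k\to\sigma_2^2-\sigma_1^2<0$ (valid once the iterates are close to $\pm v_1^*$), so that $r_k=b_k/(2\abs{a_k})\to-\infty$ and $\tau_k=\sign(a_k)(r_k+\sqrt{r_k^2+1})\to0$; then $\norm{v^{k+1}-v^k}^2=2-2/\sqrt{1+\tau_k^2}\to0$, and together with $\mathrm{dist}(v^k,\{\pm v_1^*\})\to0$ this forces eventual capture by one antipode. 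Your route is more quantitative and in fact recovers, in qualitative form, the content of the paper's separate Proposition~3.19 on the decay of $\tau_k$; the paper's hemisphere argument is more elementary and does not need the spectral-gap bound or $a_k\to0$. Both are correct, and each buys something: yours gives the extra information that the step sizes vanish; the paper's is shorter and more self-contained within the proof of the theorem.
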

\begin{proof}
  We consider the set $B$ from Lemma~\ref{lem:B_Lemma} and define the events
  \begin{align*}
    \Omega_k = \{ w \in \Omega : v^{k}(w) \not\in B \}.
  \end{align*}
  Lemma~\ref{lem:B_Lemma}(i) shows $\Omega_{k+1}\subseteq \Omega_k$ and by Lemma~\ref{lem:B_Lemma}(ii) we have
  \begin{align*}
    \big\{ w \in \Omega : \norm{Av^{k}(w)} \not\to \sigma_1 \big\} = \bigcap_{k=0}^\infty \Omega_k.
  \end{align*}
  Now we show that there is some $p$ independent of $k$ and $w \in \Omega$ such that we have 
\begin{align}\label{eq:prob-geometric-decrease}
\PP(\Omega_{k+1}) \leq (1-p)\cdot\PP(\Omega_{k}).
\end{align}
Then the assertion follows, as from $\Omega_{k+1}\subseteq \Omega_k$ we obtain that
  \begin{align*}
    \PP(\norm{A v^k} \not\to \sigma_1)
    = \PP\Big( \bigcap_{k=0}^\infty \Omega_k \Big) = \lim_{k\to\infty} \PP(\Omega_k) = 0.
  \end{align*}

  We prove~\eqref{eq:prob-geometric-decrease} as follows:
    By $U(v,x)$ we denote the update in Algorithm~\ref{alg:mafno-orth-exact} of~$v$ 
    with a direction $x\in \sphere^{d-1}\cap \{v\}^\perp$,
    i.e. $U(v, x) \coloneqq \tfrac{v + \tau x}{\sqrt{1 + \tau^2}}$
    with deterministic step size $\tau$ once $v$ and $x$ are fixed
    given by Proposition~\ref{prop:stepsize-formula}.
    By $\PP_{v}$ we denote the probability functions for $v$,
    and let $K_{x\mid v}$ be the Markov kernel defined by the push forward of the uniform distribution on $\sphere^{d-1}$ through the map $T_{v}y = (y - \scp{v}{y}v)/\norm{y - \scp{v}{y}v}$, i.e. the conditional probability function of $x$ given $v$.

We obtain by Lemma~\ref{lem:B_Lemma} (ii) and the law of total probability 
\begin{align}
        \PP(\Omega_{k+1})
        &= \int\limits_{\sphere^{d-1}} \ \int\limits_{\sphere^{d-1}\cap \{v\}^\perp} 
        \mathbbm{1}_{U(v,x) \not \in B} \  K_{x\mid v}(v, \ \dd x) \ \dd \PP_{v^k}(v) \notag \\
        &= \int\limits_{\sphere^{d-1}\setminus B} \; \int\limits_{\sphere^{d-1}\cap \{v\}^\perp} \mathbbm{1}_{U(v, x) \notin B} \ K_{x\mid v}(v, \ \dd x)\ \dd \PP_{v^{k}}(v).
        \label{eq:prob-Ak+1}
\end{align}
Now we use Lemma~\ref{lem:inf_probability} as follows:
As $B$ is relatively open in $\sphere^{d-1}$ and contains a right singular vector $v_{1}$ for the singular value $\sigma_{1} = \norm{A}$, 
    there exists $\theta>0$ with 
        \begin{align}
    	\label{eqn:relation_B_Bepsilon}
    	C_\theta \coloneqq \Big( B_\theta(v_1) \cup B_\theta(-v_1)\Big)\cap \sphere^{d-1}\subset B. 
    \end{align}
    Lemma~\ref{lem:inf_probability} 
    with this $\theta$ and $v^*=v_1$ gives the lower bound $p_{\theta,d}>0$ on the probability of sampling a direction $x$ for which there is a stepsize that will lead to a new point in $B_{\theta}(v_{1})$. 
    The update of the algorithm maximizes $\norm{Av^{k+1}}$ over the stepsize and effectively maximizes the value of $\norm{Av}$ for $v$ on a semicircle with midpoint $v^{k}$ in direction $x^{k}$.
    Thus, for each direction $x$ for which a step into~$B$ exists,
    the algorithm will choose a stepsize that will land in $B$ as well.
   Hence, we get from the lower bound from Lemma~\ref{lem:inf_probability}, \eqref{eqn:relation_B_Bepsilon} and~\eqref{eq:prob-Ak+1} (using Fubini's lemma) that 
\begin{align*}
\PP(\Omega_{k+1}) 
& \leq \int\limits_{\sphere^{d-1}\setminus B} \; \int\limits_{\sphere^{d-1}\cap \{v\}^\perp} \mathbbm{1}_{U(v, x) \notin C_\theta}\ K_{x\mid v}(v, \ \dd x)\ \dd \PP_{v^{k}}(v) \\
&= \int\limits_{\sphere^{d-1}\setminus B} K_{x \mid v}(v, U^{-1}(v, C_\theta^c)) \ \dd \PP_{v^{k}}(v) \\
&\leq \int\limits_{\sphere^{d-1}\setminus B} (1-p_{\theta,d})\ \dd\PP_{v^{k}}(v) 
= (1-p_{\theta,d})\PP(\Omega_{k}),
\end{align*}
by the law of total probability, again, which proves \eqref{eq:prob-geometric-decrease}.

    Now assume that the singular space to $\sigma_{1}$ is one-dimensional and spanned by $v_{1}\in \sphere^{d-1}$. Then we have that 
\begin{align*}
\big\{ w \in \Omega : \norm{Av^{k}(w)} \not\to \sigma_1 \big\} 
= \{ w \in \Omega : v^{k}(w)\not\to v_{1}\}\cap\{w \in \Omega : v^{k}(w)\not\to -v_{1}\}.
\end{align*}
However, a sequence $(v^{k})$ of realizations cannot accumulate at both $v_{1}$ and $-v_{1}$: By construction, the sequence $\norm{Av^{k}}$ is increasing and, moreover, two consecutive realizations of iterates $v^{k+1}$ and $v^{k}$ always lie on the same hemisphere, i.e. it holds $\sign(\scp{v^{k}}{v_1}) = \sign(\scp{v^{k+1}}{v_1})$. Hence, once we have $v^{k}\in B_{\theta}(v_{1})$, e.g., the next iterates cannot be in $B_{\theta}(-v_{1})$ since, for $\theta$ small enough, this set is too far apart to get there without decreasing the objective value.
\end{proof}

\begin{remark}[Stopping criteria]
    \label{rem:stop_criteria}
    In Proposition~\ref{lem:E_a_k_sqr} we have seen that 
    $\EE[a_k^{2} \mid v^k] = \tfrac1{d-1}\norm{A^{*}Av^{k} - \norm{Av^{k}}^{2} v^{k}}^{2}$, i.e. a small expectation of $a_{k}^{2}$ shows that $v^{k}$ is close to a singular vector.
    Furthermore,
    Lemma~\ref{lem:a=0-at-sing-vecs} tells us 
    that $a_{k}=0$ only happens at right singular vectors.
    This motivates to try to use the expectation of $a_{k}^{2}$ as stopping criterion,
    i.e. stop if  $(d-1)\EE[a_{k}^{2}]\leq \varepsilon^{2}$ as a stopping criterion.
    It turns out to be practical to start approximating the expectation when we observe that $a_{k}^{2}$ is small:
    If $(d-1)a_{k}^{2}\leq\varepsilon^{2}$, 
    resample $x^{k}$ several times and stop if the observed empirical mean obeys the same bound.
\end{remark}

\begin{remark}[Handling the case of singular values of multiplicity $d-1$]
  \label{rem:multiplicity-d-1}
  The assertion of Lemma~\ref{lem:B_Lemma}(ii) also holds in the case of a singular value of multiplicity $d-1$, implying that Algorithm~\ref{alg:mafno-orth-exact} will also converge in this case:
  Under this assumption, there are only two distinct singular values, and we distinguish two cases: If the smaller singular value has multiplicity $d-1$, then a uniform random initialization $v^{0}\in\sphere^{d-1}$ will almost surely give $\norm{Av^{0}}>\sigma_{2}$ as the corresponding eigenspace has measure zero. Hence, we have $a^{0}\neq 0$ and can perform a step. Then either we are done (which is almost surely not the case) or we have $\sigma_{2}<\norm{Av^{k}}<\sigma_{1}$ and we obtain $\norm{Av^{k}}\to\norm{A}$ from Lemma~\ref{lem:B_Lemma}.
  If the larger singular value has multiplicity $d-1$ we argue as follows:
  First note that $\sigma_1 \geq \norm{A v^0} \geq \sigma_2$ for any $v^0 \in \RR^d$.
  Second, if we initialize $v^{0}$ uniformly on $\sphere^{d-1}$, and since the eigenspace corresponding to $\sigma_1$ is one dimensional, having equality in the lower bound means the $v^0$ is a right singular vector corresponding to $\sigma_2$, where the eigenspace is measure zero such that $\norm{Av^{0}}>\sigma_{2}$ almost surely.
  Moreover, by Lemma~\ref{lem:a=0-at-sing-vecs} we obtain that $a_0 \neq 0$ holds (a.s.). By optimality of the stepsize $\tau_0 \in \RR$ we obtain a direct projection onto the space of normalized maximal singular vectors, which is in this case an equator of the $(d-1)$-sphere. We end up with convergence after one iteration, i.e. $\norm{A v^1} = \sigma_1$. 
\end{remark}

\begin{remark}[Handling the case of singular values of multiplicity $d$---detecting orthonormal columns]
    \label{rem:second_assump_gone}
  A curious fact about Algorithm~\ref{alg:mafno-orth-exact} is as follows: 
  If we sample $v^{0}$ uniformly on the unit sphere and observe $a_0 = 0$, then we almost surely have that the operator~$A$ has orthogonal columns and all have the same norm, i.e. that 
    \begin{align}
    \label{eqn:A_with_ortho_col_and_same_norm}
        A^{*}A = cI_{d}
    \end{align}
    holds for some $c>0$. 
    To see this,
    we argue by contradiction:
    If \eqref{eqn:A_with_ortho_col_and_same_norm} does not hold, 
    then $A^*A$ has at least two distinct eigenvalues 
    and therewith at least two distinct proper spaces of eigenvectors.
    Therefore $v^0$ is,
    as uniformly sampled vector from the $(d-1)$-sphere,
    almost surely not a right singular vector of $A$. 
    Hence, 
    we obtain from Lemma~\ref{lem:a=0-at-sing-vecs} 
    that $a_0=0$ happens with probability zero.
    Equivalently,
    we have 
    \begin{align*}
        \PP(a_0 = 0) > 0
        \quad \Rightarrow \quad 
        A^*A = cI_{d},
    \end{align*}
    which yields the assertion. 
\end{remark}

\begin{remark}[Assumption~\ref{ass:ass_sing_rang} can be dropped]
  \label{rem:assumption-can-be-dropped}
  We started the convergence analysis with Assumption~\ref{ass:ass_sing_rang}, i.e. that $A$ has no singular value of multiplicity $d-1$ or $d$. This was used to ensure that if we have $a^{0}\neq 0$, we will have $a^{k}\neq 0$ throughout the iteration.
  However, now we see that this assumption can be dropped:  Remark~\ref{rem:multiplicity-d-1} shows that the method also works if there is a singular value of multiplicity $d-1$ and Remark~\ref{rem:second_assump_gone} shows the same if we have just a single singular value.
\end{remark}

\begin{remark}[Calculating leading right singular vectors]
  One can extend Algorithm~\ref{alg:mafno-orth-exact} to compute a number of leading singular values and corresponding right singular vectors (if they are all distinct). Once $\sigma_{1}$ and $v_{1}$ have been found, we can restrict Algorithm~\ref{alg:mafno-orth-exact} to the space $v_{1}^{\bot}$ using Gram-Schmidt orthogonalization for $x^{k}$ and find $\sigma_{2}$ and $v_{2}$ in this way. Although this procedure gets more computationally heavy for each new singular value, we may still be able to compute the first few leading right singular vectors.
\end{remark}

\subsection{Convergence rates}
\label{sec:conv_rates}

In this subsection 
we derive two convergence results:
one for the sequence $(a_k)_{k \in \NN}$ generated by Algorithm~\ref{alg:mafno-orth-exact} and the second
for the corresponding singular vector and singular value equation,
i.e. 
\[
    \norm{A^*A v^k - \norm{A v^k}^2 v^k}^2, \quad k \in \NN,
    \quad \text{where}\quad 
    v^k\;\text{is from Algorithm~\ref{alg:mafno-orth-exact}}.
\]

The following proposition 
shows the convergence of the $a_k^2 = \scp{A v^k}{A x^k}^2$ 
from Algorithm~\ref{alg:mafno-orth-exact} towards zero 
in a sublinear rate.

\begin{proposition}\label{prop:ak-conv-rate}
  Let $(a_k)_{k \in \NN}$ be the sequence of random variables
  generated by Algorithm~\ref{alg:mafno-orth-exact}.
  It holds almost surely
  \[
    \min_{0\leq k \leq n} a_k^2 \leq \tfrac{2\norm{A}^{4}}{n+1}.
  \]
\end{proposition}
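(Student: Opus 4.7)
The plan is to combine the ascent identity from Lemma~\ref{lem:ascent-formula} (which gives $\|Av^{k+1}\|^2 - \|Av^k\|^2 = \tau_k a_k$) with a lower bound of the form $\tau_k a_k \geq a_k^2/(2\|A\|^2)$, and then telescope. Since $\|Av^{n+1}\|^2 \leq \|A\|^2$ and $\|Av^0\|^2 \geq 0$, telescoping immediately yields
\[
    \sum_{k=0}^{n} \tau_k a_k \;=\; \|Av^{n+1}\|^2 - \|Av^0\|^2 \;\leq\; \|A\|^2,
\]
so once the lower bound on $\tau_k a_k$ is in hand we get $\sum_{k=0}^n a_k^2 \leq 2\|A\|^4$ and the minimum among the first $n+1$ terms is at most $2\|A\|^4/(n+1)$, which is the claim.

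The key step is therefore the inequality $\tau_k a_k \geq a_k^2/(2\|A\|^2)$. From the closed form in Proposition~\ref{prop:stepsize-formula}, a direct calculation gives
\[
    \tau_k a_k \;=\; \frac{b_k}{2} + \sqrt{\frac{b_k^2}{4}+a_k^2}.
\]
Rationalizing the right-hand side (multiplying by its conjugate) turns this into
\[
    \tau_k a_k \;=\; \frac{a_k^2}{\sqrt{b_k^2/4+a_k^2}-b_k/2}.
\]
It then suffices to upper bound the denominator by $2\|A\|^2$. Using $\sqrt{x+y}\leq \sqrt{x}+\sqrt{y}$ and $-b_k/2 \leq |b_k|/2$, the denominator is at most $|a_k|+|b_k|$, and the trivial bounds $|a_k|=|\langle Av^k,Ax^k\rangle|\leq \|A\|^2$ and $|b_k|\leq \max(\|Av^k\|^2,\|Ax^k\|^2)\leq \|A\|^2$ finish the estimate.

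Note that this argument is entirely deterministic once the iterates are defined, so the ``almost surely'' only comes in via the well-definedness of $\tau_k$ (i.e.\ $a_k\neq 0$), guaranteed by Corollary~\ref{cor:no-sing-vectors-during-iteration} together with Remarks~\ref{rem:multiplicity-d-1} and~\ref{rem:second_assump_gone}; if some $a_k=0$ occurs the bound is trivial. The main obstacle is the algebraic bound on $\tau_k a_k$, but the conjugate-rationalization trick makes it genuinely short; everything else is telescoping.
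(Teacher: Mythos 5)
Your proof is correct and follows essentially the same approach as the paper: both establish the key bound $a_k^2 \leq 2\|A\|^2\,\tau_k a_k$ using $|a_k|,|b_k|\leq\|A\|^2$, and then telescope the ascent identity from Lemma~\ref{lem:ascent-formula}. The only difference is cosmetic: the paper derives the bound directly from the quadratic relation $a_k^2=a_k\tau_k(a_k\tau_k-b_k)$ (equation~\eqref{eq:ab_tau}), whereas you unpack the explicit stepsize formula and rationalize, but the resulting denominator $\sqrt{b_k^2/4+a_k^2}-b_k/2$ equals $a_k\tau_k-b_k$, so the two derivations are algebraically the same.
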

\begin{proof}
  From \eqref{eq:ab_tau} we have  $a_k\tau_k(a_k\tau_k- b_{k}) = a_k^2$ and
  from Lemma~\ref{lem:ascent-formula} we get that $a_{k}\tau_{k} = \norm{Av^{k+1}}^{2} - \norm{Av^{k}}^{2}\geq 0$ and $a_{k}\tau_{k}\leq\norm{A}^{2}$. We also have $\abs{b_{k}}\leq\norm{A}^{2}$, so we get 
\begin{align*}
a_{k}^{2} = a_k\tau_k(a_k\tau_k- b_{k}) \leq 2\norm{A}^{2}a_{k}\tau_{k} = 2\norm{A^{2}}\left(\norm{A v^{k+1}}^2 -\norm{A v^k}^2\right)
\end{align*}
  We sum this equation from $k=0,\dots,n$ and arrive at 
  \begin{align}
    \sum\limits_{k=0}^{n}a_{k}^{2}\leq 2\norm{A}^2 \norm{Av^{n+1}}^{2}\leq 2\norm{A}^4.
    \label{eq:ak_min_O}
  \end{align}
  Finally, estimating $\sum\limits_{k=0}^{n}a_{k}^{2} \geq (n+1)\min_{0 \leq k \leq n} a_{k}^{2}$ proves the claim.
\end{proof}

\begin{remark}[Convergence rate for $a_k \tau_k$]
    \label{rem:pre_ak_min_O}
    Summing up the equation from Lemma~\ref{lem:ascent-formula} 
    for $k = 0,...,n$ we have $(n+1)\min_{0 \leq k \leq n}a_k \tau_k \leq \sum_{0 \leq k \leq n} a_k \tau_k = \norm{A v^{n+1}}^2 - \norm{A v^0}^2 \leq \norm{A}^2$.
    Hence we also have $\min_{0 \leq k \leq n} a_k \tau_k \leq \tfrac{\norm{A}^2}{n+1}$ (a.s.).
\end{remark}

We can turn the above proposition into a convergence rate for the error in
the eigenvector equation for $A^*A$ with approximative eigenvalue $\norm{A v^k}$
in the $k$-th iteration.

\begin{theorem}
    \label{thm:conv_rate_eigen_equation}
    Let $(v^k)_{k \in \NN}$ be the sequence of random variables
    generated by Algorithm~\ref{alg:mafno-orth-exact}.
    It holds almost surely
    \[
        \min_{0 \leq k \leq n} 
        \PP\Bigl(\norm[\big]{A^*A v^k - \norm{A v^k}^2 v^k}^2 \geq \varepsilon\Bigr)
        \leq \tfrac{2(d-1)\norm{A}^{4}}{(n+1) \varepsilon}.
    \]
\end{theorem}
\begin{proof}
  First,
  from Lemma~\ref{lem:E_a_k_sqr},
  we get 
  \begin{equation}
    \label{eq:cond_EE}
    \EE[a_k^2 \mid v^k]
    = \tfrac{1}{d-1}\norm{\norm{Av^{k}}^{2}v^{k} - A^{*}Av^{k}}^{2}.
  \end{equation}
  From~\eqref{eq:ak_min_O} we get 
    \[
        \sum_{k = 0}^n \EE[a_k^2 ] 
        = \EE\Bigr[\sum_{k = 0}^n a_k^2 \Bigl] 
        \leq  2 \norm{A}^4.
    \]
    Using the law of total expectation and \eqref{eq:ak_min_O} 
    we get
    \[
        \min_{0 \leq k \leq n} \EE\Bigl[\EE\bigl[a_k^2 \mid v^k\bigr]\Bigr]
        = \min_{0 \leq k \leq n} \EE\bigl[a_k^2\bigr]
        \leq \tfrac{2 \norm{A}^4}{n+1}.
    \]
    Inserting \eqref{eq:cond_EE} yields 
    \[
        \min_{0 \leq k \leq n} 
        \EE\Bigl[\norm[\big]{\norm{A v^{k}}^{2} v^{k} - A^{*}A v^{k}}^{2}\Bigr]
        \leq \tfrac{2 (d-1) \norm{A}^4}{n+1}.
    \]
    Now, let $k^*(n)$ be the index for which the latter minimum is attained,
    such that for any $\varepsilon > 0$ holds by Markov's inequality
    \begin{align*}
        \PP\Bigl(\norm[\big]{\norm{A v^{k^*(n)}}^{2} v^{k^*(n)} - A^{*}A v^{k^*(n)}}^{2} \geq \varepsilon \Bigr)
        &\leq \frac{\EE\bigl[\norm[\big]{\norm{A v^{k^*(n)}}^{2} v^{k^*(n)} - A^{*}A v^{k^*(n)}}^{2}\bigr]}{\varepsilon} \\
        &\leq \frac{2 (d-1) \norm{A}^4}{(n+1) \varepsilon}.
        \qedhere
    \end{align*}
\end{proof}

By Theorem~\ref{thm:conv_rate_eigen_equation}, we can substitute $\varepsilon = 1/\sqrt{n+1}$,
which implies an error bound of $1/\sqrt{n+1}$ with probability at least $1 - \bigO(1/\sqrt{n})$.
In other words, with probability at least $1 - p$, the error is $\bigO(1/n)$ for any $p \in (0,1)$.

\section{Numerical experiments}
\label{sec:numerical-experiments}

We present numerical experiments to illustrate the practical performance of Algorithm~\ref{alg:mafno-orth-exact}. The code (Jupyter notebooks with Python code) can be found at
\url{https://github.com/dirloren/matrix_free_norms}.

\subsection{Algorithm~\ref{alg:mafno-orth-exact} can be fast when power iteration is slow}
In the first experiment, we illustrate
that Algorithm~\ref{alg:mafno-orth-exact} may be fast in some cases.
We consider the matrix of the form 
\begin{align}
    \label{eq:matrix_fail_imp}
    A 
    \coloneqq 
    \left[\begin{matrix}
        1 & \varepsilon \\
        0 & 1
    \end{matrix}\right]
\end{align}
where $1 \gg \varepsilon > 0$ is small.
In this case,
the singular values are given by the eigenvalues of the matrix 
\begin{align*}
    B 
    \coloneqq 
    A^{*} A 
    = 
    \left[\begin{smallmatrix}
        1 & \varepsilon \\
        \varepsilon & 1 + \varepsilon^2
    \end{smallmatrix}\right]
    \quad \text{namely} \quad
    \lambda_{\max,\min}=
    \tfrac{\varepsilon^2 \pm \varepsilon\sqrt{\varepsilon^2 + 4} + 2}{2},
\end{align*}
such that 
$\norm{A} = \sqrt{\lambda_{\max}} = \sqrt{1 + \tfrac{\varepsilon^2 + \varepsilon\sqrt{\varepsilon^2 + 4} }{2}}$.

\begin{figure}[b]
  \begin{minipage}{0.48\linewidth}
      \includegraphics[width=\textwidth, clip=true, trim=0pt 0pt 50pt 50pt]{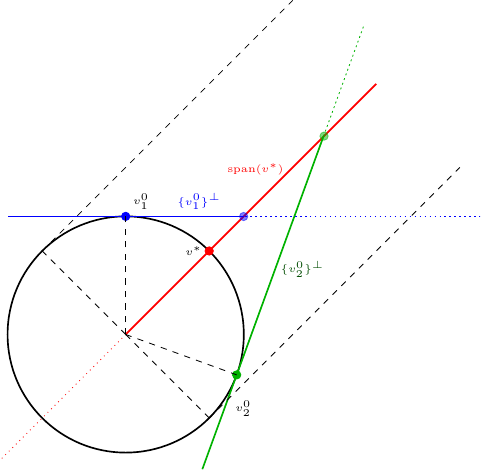}
  \end{minipage}
  \hfill
  \begin{minipage}{0.45\linewidth}
  \caption{Illustration of Algorithm~\ref{alg:mafno-orth-exact} in two dimensions. Almost every initialization (here, two of them are visualized, namely $\{v_1^0, v_2^0\}$, with their corresponding tangent spaces $\{v_1^0\}^\perp$ and $\{v_2^0\}^\perp$) shares a half circle with one of the two global maxima $v^{*}, -v^{*}$. Since the line search is exact, the method will arrive at $v^{*}$ after one step.}
  \label{fig:circle-convergence-algorithm}
  \end{minipage}
\end{figure}

We compare Algorithm~\ref{alg:mafno-orth-exact} and the power iteration.
Algorithm~\ref{alg:mafno-orth-exact} converges in one iteration up to machine precision
(the reason is that the method will explore a half circle by the exact line search and this half circle always contains one of the two solutions $\{v^*,-v^*\}$, cf. Figure~\ref{fig:circle-convergence-algorithm}).
The power iteration will converge to the solution but the convergence rate depends on the ratio of the largest and second largest eigenvalue of $A^{*}A$
in magnitude \cite[Section 7.3]{golub2013}.
In this case, 
we have 
\begin{align*}
  \tfrac{\lambda_{\max}}{\lambda_{\min}} = \tfrac{\varepsilon^2 + \varepsilon\sqrt{\varepsilon^2 + 4} + 2}{\varepsilon^2 - \varepsilon\sqrt{\varepsilon^2 + 4} + 2} 
  = (1 + \tfrac{\varepsilon^{2}}2 + \tfrac{\varepsilon}2\sqrt{\varepsilon^2+4})^{2} = 1 + \bigO(\varepsilon)
\end{align*}
and we see that we will need many iterations for small $\varepsilon$.
In our experiments we reached an error of $10^{-5}$ after about $100$ iterations (depending on the random initialization) for $\varepsilon=10^{-2}$ and already about $1.000$ iterations for $\varepsilon = 10^{-4}$.

\subsection{Simple $0-1$ matrices}

Now we consider the matrix $A = \diag(1,1,0)$ where
the set of maximal right singular vectors is isomorphic to the 1-sphere.
Similarly to the first experiment,
our method converges theoretically in one iteration.
However, if we do not use the stopping criterion from Remark~\ref{rem:stop_criteria}
the iterates will wander around the set of optima, as
can be seen in Figure~\ref{fig:numerical_instabilities}, which results in an exploration 
of the entire unit sphere in the space of right singular vectors.

\begin{figure}
  \begin{minipage}{0.48\linewidth}
  \includegraphics[width=\textwidth]{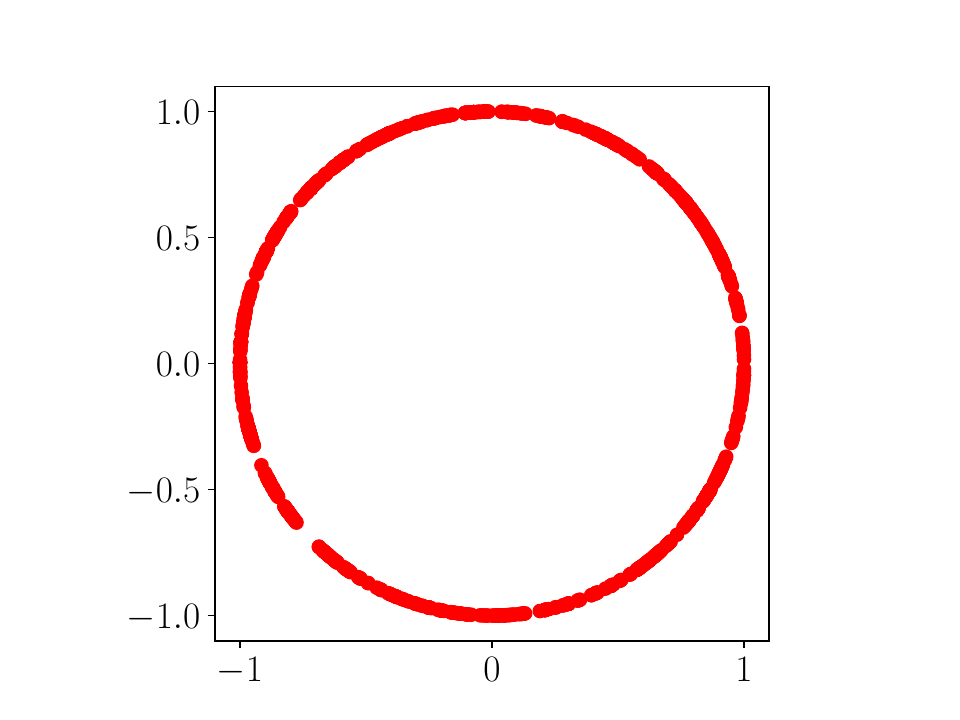}
  \end{minipage}
  \hfill
  \begin{minipage}{0.48\linewidth}
  \caption{Visualization of the great circle in the $x_1$-$x_2$-plane of the $\sphere^2$:
  First two components of the iterates $v^{k}$ (for $k=1,\dots,5000$)
  of Algorithm~\ref{alg:mafno-orth-exact} for $A = \diag(1,1,0)$,
  only plotted when $|v^{k}_3| < \varepsilon$.}
  \label{fig:numerical_instabilities}
  \end{minipage}
\end{figure}

In another experiment,
we consider the matrix $A = [1,0,...,0] \in \RR^{1\times d}$,
which has operator norm $\norm{A} = 1$.
We demonstrate the convergence of our method 
by performing $50$ runs for $d \in \{100, 1000, 10000\}$
of Algorithm~\ref{alg:mafno-orth-exact} with $k = 10d$ iterations.
The resulting (relative) error $(\norm{A}-\norm{A v^k})/\norm{A}$ 
and the mean squared error for the last $d-1$ components
is visualized in Figure~\ref{fig:one_mat}.
We observe linear convergence in all cases and also that the convergence speed is independent of the dimension.
However,
the power iteration on the matrix $A^*A = \diag(1,0,...,0) \in \RR^{d \times d}$
would converge in exactly one iteration,
if the initial vector is uniformly sampled from the sphere $\sphere^{d-1}$.

\begin{figure}[htb]
  \centering
  \begin{tabular}{ccc}
    \qquad $1\times 100$ & \qquad$1\times 1\,000$ & \qquad$1\times 10\,000$\\
  \includegraphics[width=0.3\textwidth]{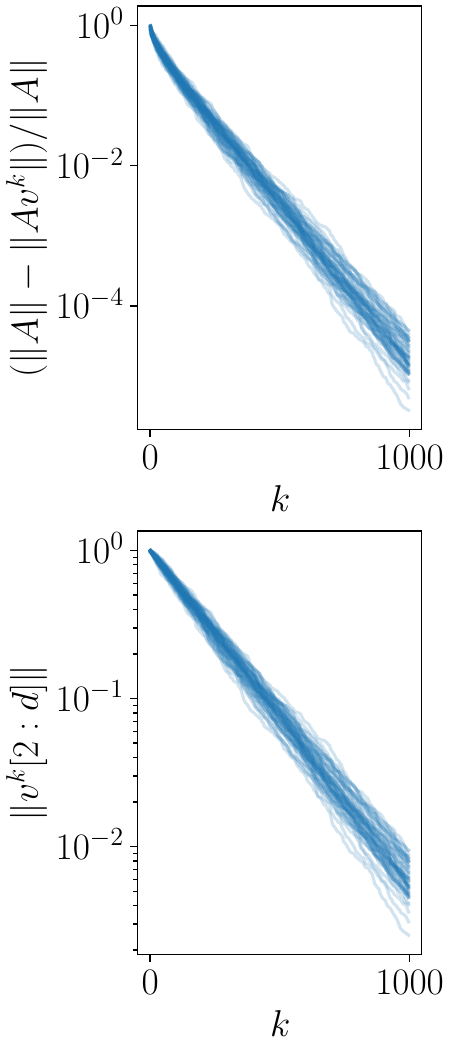}&
  \includegraphics[width=0.3\textwidth]{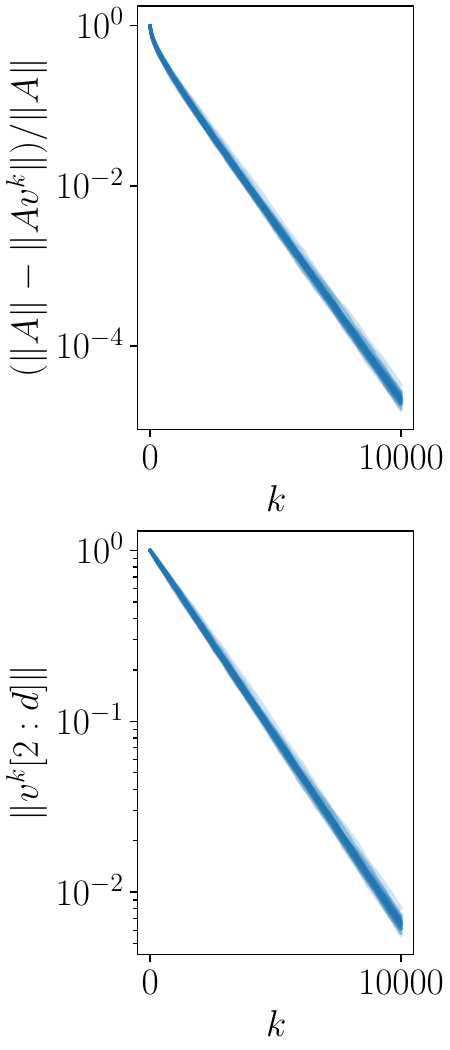}&
  \includegraphics[width=0.3\textwidth]{simax-figures/experiment2_1000.pdf}
  \end{tabular}
  \caption{Results for 50 runs of Algorithm~\ref{alg:mafno-orth-exact} for matrices $A = [1,0,...,0] \in \RR^{1\times d}$. All plots are linear in the $x$-axis and logarithmic in the $y$-axis indicating linear convergence of the quantities in this case.}
  \label{fig:one_mat}
\end{figure}

\subsection{Random Gaussian matrices}

We investigate the convergence rate in dependence on the dimension 
of the input and output space of the operator $A$, respectively.
To that end we generated random matrices of size $m\times d$ for different values of $m$ and $d$, performed $50$ runs of Algorithm~\ref{alg:mafno-orth-exact} with $k=20d$ iterations for each matrix and plot the relative error $(\norm{A}-\norm{Av^{k}})/\norm{A}$, the running minimum over the $a_{k}^{2}$ over iterations in Figure~\ref{fig:experiment3} and the upper bound from Proposition~\ref{prop:ak-conv-rate}.
We observe that the number $m$ of rows does not seem to have a large influence on the convergence speed, but for larger $d$ the method needs more iterations to reach a similar accuracy (note the different scaling of the y-axis).

\begin{figure}[htb]
  \centering
  \begin{tabular}{cc}
    $10\times50$ & $100\times500$\\
    \includegraphics[width=0.47\textwidth]{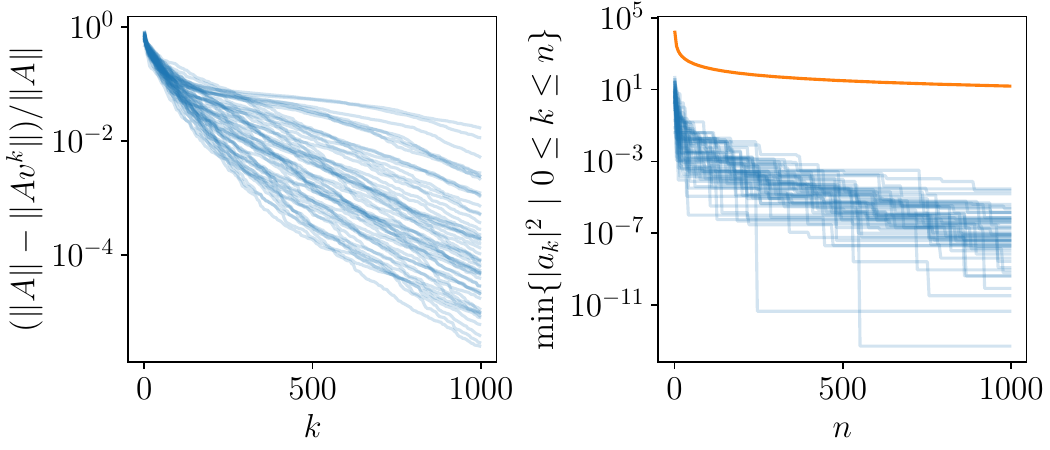}&
    \includegraphics[width=0.47\textwidth]{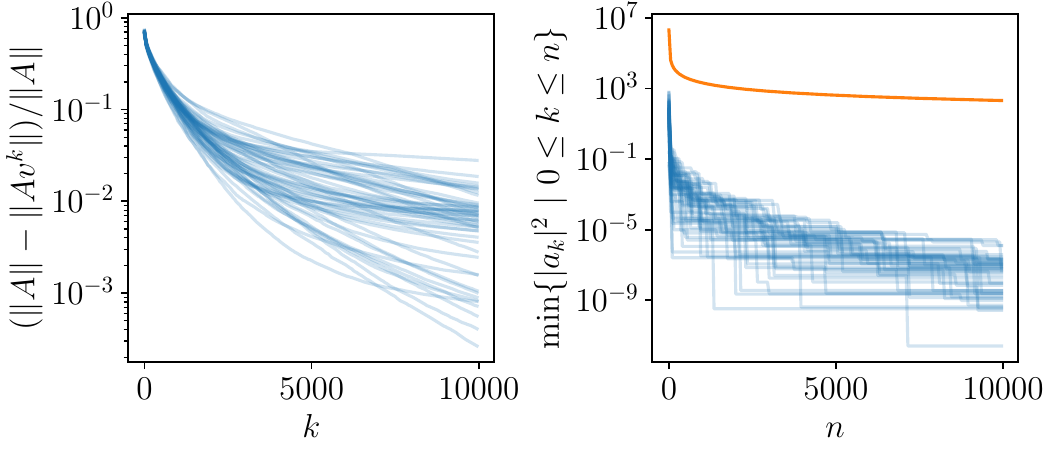}\\
    $50\times50$ & $500\times500$\\
    \includegraphics[width=0.47\textwidth]{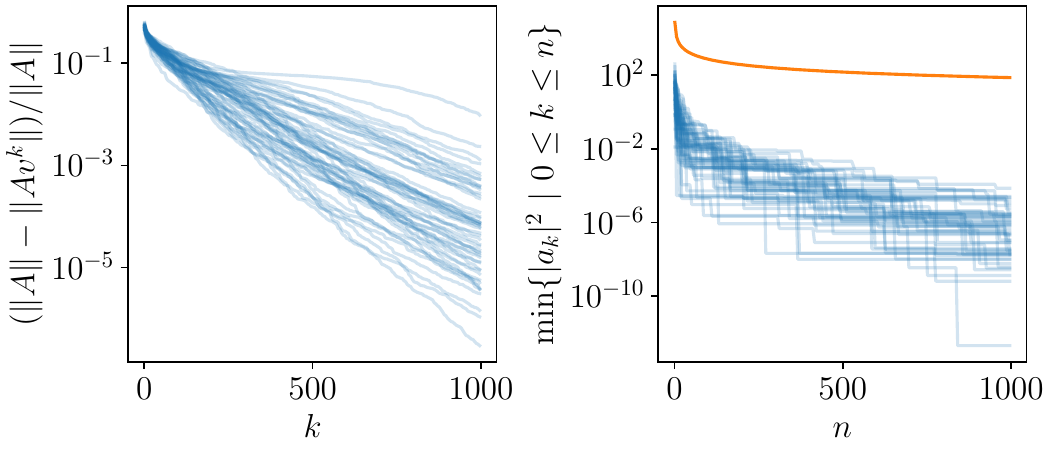}&
    \includegraphics[width=0.47\textwidth]{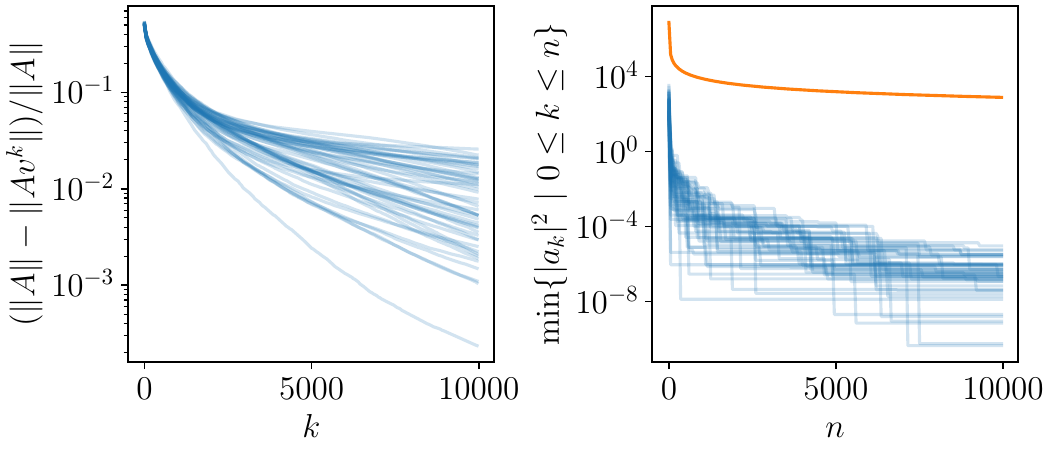}\\
    $100\times50$ & $1000\times500$\\
    \includegraphics[width=0.47\textwidth]{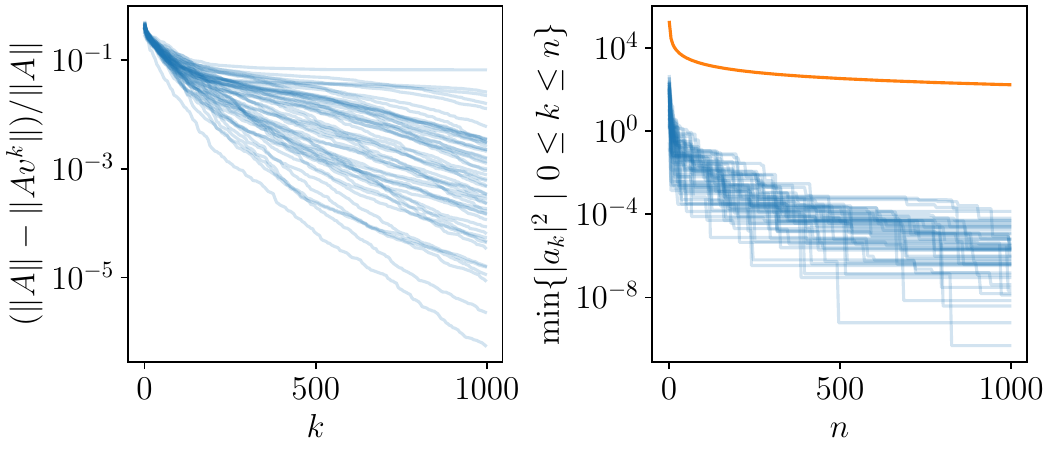} &
    \includegraphics[width=0.47\textwidth]{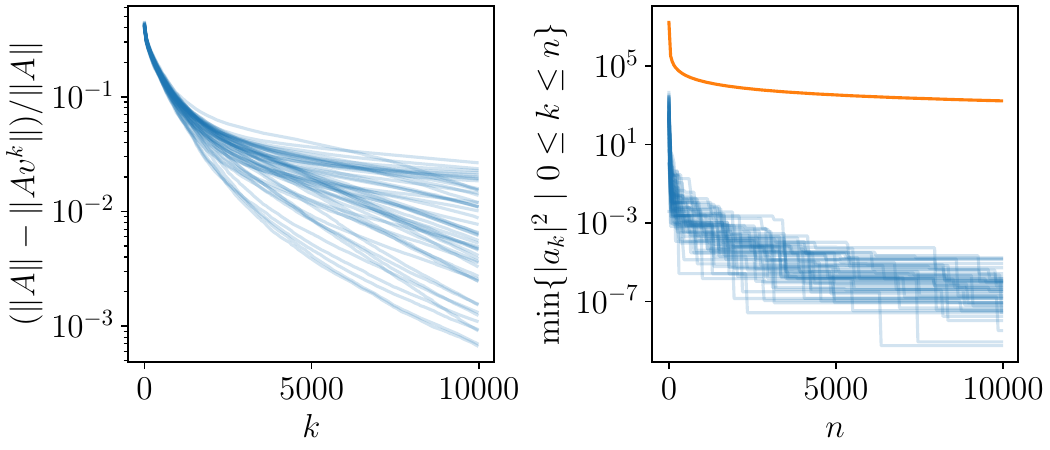}
  \end{tabular}
  \caption{Results for 50 runs of Algorithm~\ref{alg:mafno-orth-exact} for Gaussian matrices of different sizes. Left plots: Relative error over iterations. Right plots: Running minimal value of $\abs{a_{k}}^{2}$ (blue) and upper bound from Proposition~\ref{prop:ak-conv-rate} (orange) over iterations.}
  \label{fig:experiment3}
\end{figure}

\subsection{Implementations of the Radon transform}
\label{sec:implementation-radon}

We come back to the example of the Radon transform in \texttt{skimage} from the introduction and want to clarify what the computed norms in Table~\ref{tab:table-norm-radon} mean. To that end we applied our method to compute the norm of the operators
\begin{lstlisting}[language=Python]
N = 125
numAng = 6
theta = np.linspace(0.0, 180, numAng, endpoint=False)
def A(v):
    return radon(v.reshape(N,N),theta=theta,preserve_range=True).ravel()
def AT(w):
    return iradon(w.reshape(N,numAng),theta=theta,filter_name=None,preserve_range=True).ravel()
\end{lstlisting}
i.e. our dimensions for $A$ and $d = 15625$ and $m = 750$.
(We use a smaller number of angles to speed up computation.)
We run our method using the stopping criterion from Remark~\ref{rem:stop_criteria} with $\varepsilon^{2} = 10^{-8}$ and 10 resamples of $x$.
Our method terminates after $96\,547$ iterations for \texttt{A} and after $5\,599$ iterations for \texttt{AT}. The power iteration for \texttt{A} stabilizes after $10$ iterations and we report the values in Table~\ref{tab:table-norm-radon-2}.

\begin{table}[htb]
  \begin{minipage}{0.48\linewidth}
  \begin{tabular}{lr} \toprule 
    $\norm{A}$ from~\eqref{eq:normA1} & 12.9659\\
    $\norm{A}$ from~\eqref{eq:normA2} & 6.6342\\
    $\norm{A}$ from~\eqref{eq:normA3} & 12.9659\\
    $\norm{A}$ from~\eqref{eq:normA4} & 25.3404\\
    $\norm{A}$ by our method & 25.4766\\
    $\norm{A^*}$ by our method & 6.6342 \\ \bottomrule
  \end{tabular}
  \end{minipage}
  \hfill
  \begin{minipage}{0.51\linewidth}
  \caption{Numerical approximation of the operator norm of the Radon transform in \texttt{scikit-image}  for images of size $125\times 125$ and six equidistant angles by the power method and different choices of the approximation and norms of $A$ and $A^{*}$ by our method.}
  \label{tab:table-norm-radon-2}
  \end{minipage}
\end{table}

It turns out that the power iteration with~\eqref{eq:normA2} accurately computes the norm of the backprojection $A^{*}$ and that~\eqref{eq:normA4} is a fairly accurate lower bound for the norm of $A$. We emphasize that this is not something that we could have concluded without using our method.

We can also compare the different singular vectors, i.e. the limits $v$ of the power iteration and our method. We show both singular vectors and their absolute difference in Figure~\ref{fig:radon-sing-vec}. The difference is a combination of a slightly different structure inside the reconstruction circle and a fairly large difference on the boundary of that circle.
Even though the difference is small in the interior of the circle, the structural difference becomes completely smoothed out by using the approximation of the Radon transformation in the power method. Again, this property would not be recognized without our method.

\begin{figure}[htb]
  \centering
  \includegraphics[width=0.32\textwidth]{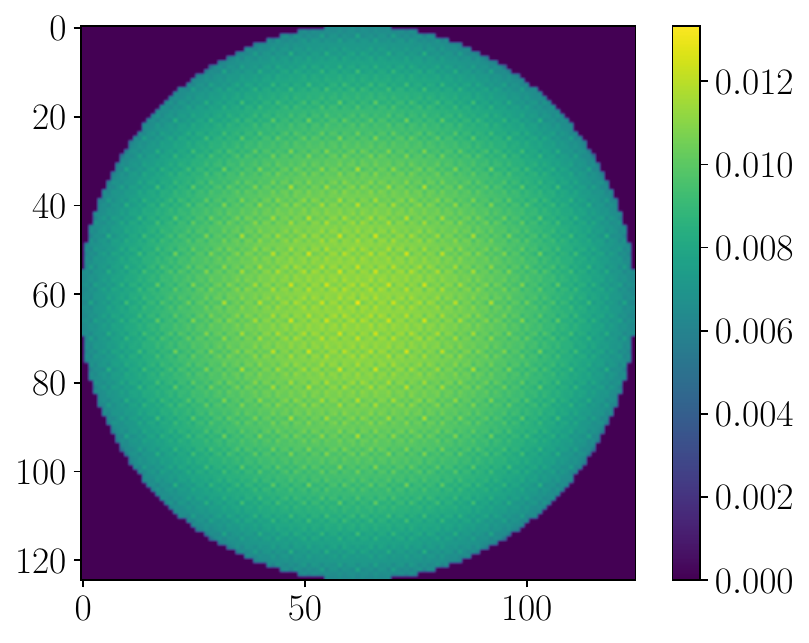}
  \includegraphics[width=0.32\textwidth]{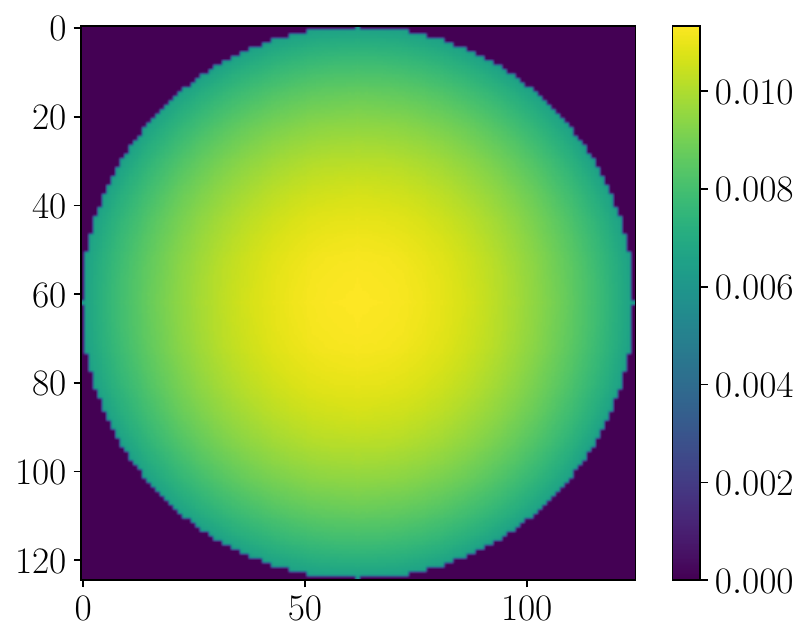}
  \includegraphics[width=0.32\textwidth]{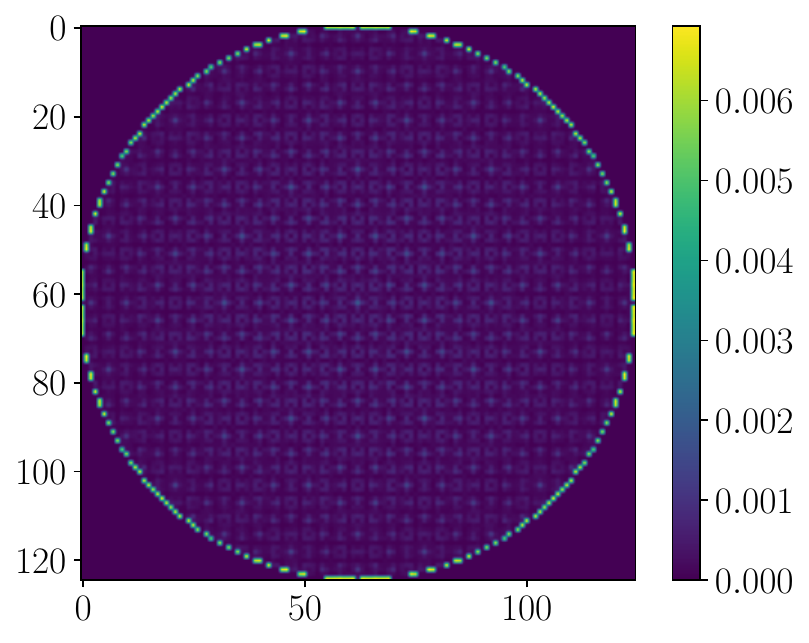}
  \caption{Singular vectors of the Radon transform of \texttt{skimage} for images of size $125\times 125$ and six equidistant angles. Left: Computed by our method. Middle: Computed by power iterations. Right: Difference of both.}
  \label{fig:radon-sing-vec}
\end{figure}

\section{Conclusion and outlook}
\label{sec:conclusion}

Algorithm~\ref{alg:mafno-orth-exact} is a stochastic method for computing the operator norm of a linear map $A$ between real Hilbert spaces and it only uses applications of the operator and computations of norms and inner products of vectors.
We show almost sure convergence of the value $\norm{Av^{k}}$ to the operator norm.

Our examples show that the method works in practice but also show that it may be slow (while it is indeed faster than the power iteration in some special cases).
The question of the convergence speed regarding $\norm{A}-\norm{Av^k}$ is still open;
however, we answered the question of the convergence speed for the corresponding eigenvector and eigenvalue equation.

We only treated real Hilbert spaces here but the case of complex spaces can be treated similarly and only the definition of $a_{k}$ should be changed to $\operatorname{Re}(\scp{Av^{k}}{Ax^{k}})$.

Another question that we did not answer here is the following. How can we calculate $\norm{A-V}$ if we only can do evaluations of $A$ and $V^{*}$ (and no evaluations of $A^{*}$ and $V$ are possible)? This question is important in the case of adjoint mismatch as it happens, for example, in computerized tomography~\cite{lorenz2023chambolle,lorenz2018randomized,chouzenoux2023convergence,Savanier2021ProximalGA,Chouzenout2021pgm-adjoint,zeng2000unmatched}. The idea we presented here can be generalized to this case if we start from the formulation 
\begin{align*}
\norm{A-V} = \max_{\norm{u}=1,\norm{v}=1} \scp{u}{(A-V)v} = \max_{\norm{u}=1,\norm{v}=1}\left( \scp{u}{Av} - \scp{V^{*}u}{v}\right)
\end{align*}
which we worked out in~\cite{bresch2025computing}.

Finally, it is worth mentioning that Algorithm~\ref{alg:mafno-orth-exact} can be adapted to calculate the \emph{smallest} singular value of $A$.
To that end, one only has to replace the $\argmax$ in the calculation of the stepsize with an $\argmin$ which can also be computed exactly. The practicality of this method shall be investigated in upcoming work. 

\paragraph{Acknowledgment} J.~Bresch gratefully thanks Martin Schoen for the correct version of Prop.~\ref{prop:acc-point-sing-vec} and Oleh Melnyk, and Gabriele Steidl for critical reading of this paper.

\appendix

\section{Appendix}
\label{sec:appendix}
Recall that the regularized incomplete Beta function is 
\begin{align*}
\mathrm{I}_x(a,b) \coloneqq \frac{\mathrm{B}(x;a,b)}{\mathrm{B}(a,b)},
\quad a,b > 0, \quad x \in [0,1],
\end{align*}
with the incomplete Beta function \cite[Eq.~(2), III.~§~4]{renyi1966wahrscheinlichkeitsrechnung}
\begin{align*}
\mathrm{B}(x;a,b) \coloneqq \int\limits_0^x t^{a-1}(1-t)^{b-1} dt 
\quad\text{and}\quad 
\mathrm{B}(a,b) \coloneqq \mathrm{B}(1;a,b).
\end{align*}

\begin{lemma}
  \label{lem:lower-bound-reg-inc-Beta}
  For $0\leq x\leq 1$ and $d\geq 3$ it holds that 
  \begin{align*}
  \mathrm{I}_{x}(\tfrac{d-2}2,\tfrac12) \geq \frac{1}{2 \sqrt{\pi d}}\sqrt{x}^{d-2}.
  \end{align*}
\end{lemma}
\begin{proof}
  We first estimate (using that $(1-t)^{-1/2}\geq 1$ for $0\leq t\leq 1$)
  \begin{align}\label{eq:est-inc-beta}
    \mathrm{B}(x;a,\tfrac12) \geq \int\limits_0^x t^{a-1}\dd t = \frac{x^{a}}{a}.
  \end{align}
  Furthermore it holds that \cite[Eq.~(3), III.~§~4]{renyi1966wahrscheinlichkeitsrechnung}
  \begin{align*}
  \mathrm{B}(\tfrac{d-2}2,\tfrac12) = \frac{\Gamma \left( \tfrac{d-2}2 \right) \Gamma \left( \tfrac12 \right)}{\Gamma \left( \tfrac{d-1}2 \right)}.
  \end{align*}
  We use $\Gamma(1/2) = \sqrt{\pi}$ and the Gautschi's inequality \cite[Thm.~3]{elezovic2000gautschi}
  \begin{align*}
  x^{1-s} < (x+\tfrac{s}{2})^{1-s} < \frac{\Gamma(x+1)}{\Gamma(x+s)} < (x+1)^{1-s}
  \end{align*}
  (valid for $0<s<1$) with $x+1 = (d-1)/2$ and $s=1/2$ to estimate 
  \begin{align*}
    \frac{1}{\mathrm{B}(\tfrac{d-2}2,\tfrac12)} 
    & = \frac{\Gamma \left( \tfrac{d-1}2 \right)}{\Gamma \left( \tfrac{d-2}2 \right) \Gamma \left( \tfrac12 \right)}
    \geq \frac1{\sqrt{\pi}} \left( \frac{d-3}{2} + \frac{1}{4}\right)^{1/2}.
  \end{align*}
  Combining this with~\eqref{eq:est-inc-beta} with $a = (d-2)/2$ yields 
  \begin{align*}
  \mathrm{I}_{x}(\tfrac{d-2}2,\tfrac12) \geq \frac{\sqrt{x}^{d-2}}{d-2} \frac2{\sqrt{\pi}} \left( \frac{d-3}{2} + \frac{1}{4}\right)^{1/2}
  \end{align*}
  For $d\geq 3 > \tfrac{20}{7}$ it holds that
  \begin{align*}
    \frac{1}{d-2} > \frac{1}{d}
    \quad\text{and}\quad
    \left(\frac{d-3}{2} + \frac{1}{4}\right)^{1/2} > \frac{\sqrt{d}}{4}
  \end{align*}
  gives the claimed bound.  
\end{proof}

\bibliographystyle{plainurl}
\bibliography{./literature}

\end{document}